\theoremstyle{plain}
\newtheorem{theorem}{Theorem}
\newtheorem{corollary}[theorem]{Corollary}
\newtheorem{lemma}[theorem]{Lemma}
\theoremstyle{definition}
\theoremstyle{remark}
\newtheorem{remark}{Remark}
\newcommand{\cstar}{\ensuremath{\text{C}^{*}}\nobreakdash-\hspace{0 pt}}
\newcommand{\ds}{\displaystyle}
\newcommand{\ol}[1]{\overline{#1}}
\newcommand{\scD}{\sA(\bbD)\times_{\alpha}\bbZ^+}
\newcommand{\scC}{C(\bbT)\times_{\alpha}\bbZ^+}
\def\bbD{\mathbb D}
\def\bbT{\mathbb T}
\def\bbZ{\mathbb Z}
     \newcommand{\sA}{\mathcal A}
     \newcommand{\sC}{\mathcal C}
     \newcommand{\sJ}{\mathcal J}
     \newcommand{\sN}{\mathcal N}
     \newcommand{\sO}{\mathcal O}
     \newcommand{\sP}{\mathcal P}
\newcommand{\al}{\alpha}
\newcommand{\be}{\beta}
\newcommand{\vpi}{\varphi}
\newcommand{\si}{\sigma}
\newcommand{\ga}{\gamma}
\begin{document}

\title[Semicrossed products of the disk algebra]{Semicrossed products of the disk algebra and the Jacobson radical}

\author[Khemphet]{Anchalee Khemphet}
\address{Department of Mathematics\\
Iowa State University\\
Ames, Iowa\\
USA}
 \email{khemphet@iastate.edu}

\author[Peters]{Justin R.\ Peters $\ddag$}
\address{Department of Mathematics \\
Iowa State University \\
Ames, Iowa\\
USA} \email{peters@iastate.edu}

\begin{abstract} We consider semicrossed products of the disk algebra with respect to
endomorphisms defined by finite Blaschke products. We characterize the Jacobson radical
of these operator algebras. Furthermore, in the case the finite Blaschke product is elliptic,
 we show that the semicrossed  product contains no nonzero quasinilpotent
elements. However, if the finite Blaschke product is hyperbolic or parabolic with zero hyperbolic step,
the Jacobson radical is nonzero and a proper subset of the set of quasinilpotent elements.
\end{abstract}

\subjclass[2010]{47L65 primary; 47L20, 30J10, 30H50 secondary}

 \thanks{{{$\ddag$} The author acknowledges partial support from the National Science Foundation, DMS-0750986}}

\maketitle

\section{Introduction}

While semicrossed products have studied for the past thirty years or more, in the past decade there has been a resurgence of interest in this class of nonselfadjoint operator algebras.  One is given an
endomorphism $\al$ of a C$^*$-algebra $\sC$  and one constructs the semicrossed product $\sC\times_{\al}\bbZ^+.$   ( e.g.,\cite{DK3},  \cite{K})
On the other hand, until recently little had been done with semicrossed products of other classes of operator algebras, though now the literature is growing (e.g., \cite{DK1}, \cite{DK2}, \cite{KK}).

In this note we consider semicrossed products of the disk algebra.  The case in which the endomorphism of the disk algebra is an automorphism, and hence implemented by a conformal map of the disc, was considered in \cite{HPW} and \cite{BP}. In a recent paper of Davidson and Katsoulis, they consider semicrossed products of the disk algebra by endomorphisms associated with finite Blaschke products. Of course, such an endomorphism of the disk algebra $\sA(\bbD)$  can also be viewed as an endomorphism of $C(\bbT),$ the continuous functions on the unit circle. They show that the semicrossed product of the disk algebra is naturally a subalgebra of the semicrossed product of $C(\bbT),$ and both algebras have the same C$^*$-envelope.

Donsig, Katalvolos and Manoussos provided a description of the Jacobson radical of semicrossed product of $C(X)$ where $X$ is a compact metric space with a map $\vpi: X \to X$ a continuous
surjection, so that $f \in C(X) \to f \circ \vpi $ is an endomorphism of $C(X).$ We show that their description of the radical applies equally well to semicrossed products of the disk algebra
with respect to finite Blaschke products (Theorem~\ref{t:jacobsonradical}). On the other hand, for semicrossed products of $C(\bbT)$ there will always be an abundance of quasinilpotent elements, indeed
nilpotent elements, even if the algebra is semi-simple. 

In the case the Jacobson radical is nonzero, we can show that the set of quasinilpotent elements properly contains the Jacobson radical.  However, if the endorphism is implemented by an
 elliptic finite Blaschke product, we show that the set of quasinilpotent elements coincides with the Jacobson radical, which is $(0).$

\section{Semicrossed Products}

Let $\vpi$ be a (non-constant) finite Blaschke product, which is not a M\"{o}bius transformation. This gives an endomorphism $\al$ of the disk algebra, $f \in \sA(\bbD) \to \al(f) = f\circ \vpi$
which is not an automorphism. Consider the algebra $\sP$ of formal polynomials
\[ \sum U^k f_k  \]
with $f_k \in \sA(\bbD)$ where $f \in \sA(\bbD)$ and $U$ satisfy the commutation relation $ fU = U \al(f).$
Any pair $(A, T)$ of contractions satisfying $ AT = T\vpi(A)$ defines a representation of $\sP.$  First note there is a contractive representation $\rho$ of the disk algebra $\sA(\bbD)$ given
by $\rho(f) = f(A).$ Then set
\[ \rho \times T (\sum U^k f_k) = \sum T^k \rho(f_k) .\]
The norm on $\sP$  defined by taking the supremum of the norms of all such representations is an operator algebra norm. The completion of $\sP$ with respect to this norm is the
semicrossed product, $\scD.$ A good discussion of this algebra can be found in \cite{DK}.

There is a one-parameter automorphism group $ t \in \bbT \to \ga_t$ acting on the algebra $\sP$ by $\ga_t(\sum U^k f_k) = \sum U^k e^{2\pi i k t} f_k$ which extends to
an automorphism, which also denote $\ga_t$, of $\scD.$ It follows from the definition of the norm that $\ga_t$ is isometric.

Let $F \in \scD.$ Then we may define the $k^{th}$ Fourier coefficient of $F$, denoted by $\pi_k(F),$ by 
\[ U^k\pi_k(F) = \int_{\bbT} e^{-2\pi i k t} \ga_t(F) \, dm(t) \]
where $dm$ is normalized Lebesgue measure on the torus.  In case $F = \sum U^j f_j$ is in the polynomial algebra $\sP,$ then $\pi_k(F) = f_k$, which is $0$ if there is no $U^k$ term in the 
polynomial.  From this it follows that $||\pi_k(F)|| \leq ||F||.$ We refer to $\pi_k(F) $ as the $k^{th}$ \textit{ Fourier coefficient} of $F.$

One result we will need, which in particular follows from \cite{DK} and \cite{K}, is that for any $f \in \sA(\bbD)$ and any $k = 0, 1, \dots, \ ||U^k f|| =||f||$ where by $||f|| $ we mean, of course,
the norm of $f$ in the disk algebra.  This result follows from the fact that there is a faithful representation of the semicrossed product which maps $U$ to an isometry.

Now the endomorphism $\al$ extends to an endomorphism, also denoted by $\al$, of the continuous functions on the circle, $C(\bbT).$ 
 When we compute the radical of the semicrossed product, we make use of
the following theorem, which is Theorem 2.1 of \cite{DK}.
\begin{theorem} Let $\vpi$ be a finite Blaschke product, and let $\al(f) = f \circ \vpi.$  Then $\scD$ is (canonically completely isometrically isomorphic to) a subalgebra of $\scC.$
\end{theorem}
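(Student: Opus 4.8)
The plan is to prove that the canonical inclusion is completely isometric by comparing the two families of covariant representations that define the respective norms. Let $\iota\colon \sA(\bbD)\to C(\bbT)$ be restriction to the boundary; by the maximum modulus principle this is a unital isometry, and since $\sA(\bbD)$ carries the operator-space structure inherited from its C$^*$-envelope $C(\bbT)$ it is a complete isometry. Moreover $\iota$ intertwines the two copies of $\al$, because $\al(f)=f\circ\vpi$ on each algebra, so it induces an algebra homomorphism $\Phi$ of $\sP$ into the corresponding polynomial algebra over $C(\bbT)$ that fixes $U$. I would first record the easy direction: a pair defining the norm of $\scC$ consists of a unitary $V$ (the image of the generator of $C(\bbT)$ under a $*$-representation) together with a contraction $T$ with $VT=T\vpi(V)$, and restricting such a pair along $\iota$ gives a pair of the kind used to define the norm of $\scD$ in which the disk-algebra generator is unitary. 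As every unitary is a contraction, these form a subfamily of all pairs $(A,T)$ with $A$ a contraction, so passing to the supremum over the larger family gives $\|X\|_{\scC}\le\|X\|_{\scD}$ for every $X\in M_n(\sP)$; thus $\Phi$ extends to a complete contraction $\scD\to\scC$.

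For the reverse inequality I would dominate each pair of $\scD$ by a pair of $\scC$ via a dilation. Given contractions $A,T$ on $\sH$ with $AT=T\vpi(A)$, the goal is to build $\sK\supseteq\sH$, a unitary $U$ on $\sK$ dilating $A$, and a contraction $\widetilde T$ on $\sK$ with $U\widetilde T=\widetilde T\vpi(U)$, in such a way that $\sH$ is semi-invariant for the algebra generated by $\{f(U):f\in\sA(\bbD)\}$ and $\widetilde T$. Granting this, compression is multiplicative on that algebra, so it recovers the original representation on all of $\sP$, and the chain $\|\rho^{(n)}_{(A,T)}(X)\|=\|P_{\sH}\,\widetilde\rho^{(n)}_{(U,\widetilde T)}(X)\big|_{\sH}\|\le\|\widetilde\rho^{(n)}_{(U,\widetilde T)}(X)\|\le\|X\|_{\scC}$ holds, the last step because $U$ is unitary and hence $f\mapsto f(U)$ extends to a $*$-representation of $C(\bbT)$, so $(U,\widetilde T)$ is admissible for $\scC$. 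Taking the supremum over pairs of $\scD$ yields $\|X\|_{\scD}\le\|X\|_{\scC}$, which together with the first paragraph gives the complete isometry.

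The heart of the matter is this dilation, and I expect it to be the main obstacle. The idea is to read covariance as intertwining: $f(A)T=Tf(\vpi(A))$ for all $f\in\sA(\bbD)$ says exactly that $T$ intertwines $\vpi(A)$ with $A$. I would take $U$ to be the minimal unitary dilation of $A$; since $\vpi$ lies in the disk algebra and $\sH$ is semi-invariant for $U$, the operator $\vpi(U)$ is a unitary dilation of the contraction $\vpi(A)$ (it is unitary because $\vpi$ maps $\bbT$ into $\bbT$). A commutant-lifting argument then provides a norm-preserving lift $\widetilde T$ of $T$ with $U\widetilde T=\widetilde T\vpi(U)$ and $\|\widetilde T\|=\|T\|\le 1$. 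The delicate points are to arrange the lift so that $\sH$ is simultaneously semi-invariant for $U$ and for $\widetilde T$ (this is what upgrades the compression identity from monomials $U^kf_k$ to all of $\sP$), and to apply commutant lifting even though $\vpi(U)$ need not be the \emph{minimal} unitary dilation of $\vpi(A)$; here one either passes to the minimal reducing part or invokes the intertwining-lifting theorem for general isometric dilations. Finiteness of the Blaschke product is used throughout to ensure that $\vpi(A)$ and $\vpi(U)$ are governed by an honest functional calculus and that the covariance relation survives the dilation.
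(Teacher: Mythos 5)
The paper does not actually prove this statement: it is imported verbatim as Theorem 2.1 of Davidson--Katsoulis \cite{DK}, so there is no in-text argument to compare yours against. Your outline does reproduce the overall shape of the argument in that reference. The easy inequality $\|X\|_{\scC}\le\|X\|_{\scD}$ for $X\in M_n(\sP)$ comes, as you say, from the fact that a covariant pair for $(C(\bbT),\al)$ restricts along the (completely isometric) inclusion $\sA(\bbD)\hookrightarrow C(\bbT)$ to a covariant pair for $(\sA(\bbD),\al)$, and the reverse inequality requires a dilation theorem carrying a contractive covariant pair $(A,T)$ to a covariant pair $(U,\widetilde T)$ with $U$ unitary. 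That framing is correct.

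But the second half of your proposal is a plan, not a proof, and the plan has a concrete gap at exactly the point you flag as ``the heart of the matter.'' The intertwining-lifting theorem applied to the single relation $AT=T\vpi(A)$ produces an operator $\widetilde T$ satisfying $U\widetilde T=\widetilde T\vpi(U)$ and lifting $T$ in the sense that the compression of $\widetilde T$ to $\sH$ is $T$; it does not, by itself, make $\sH$ semi-invariant for the algebra generated jointly by $\widetilde T$ and $\{f(U):f\in\sA(\bbD)\}$. Without that joint semi-invariance the identity $P_{\sH}\,\widetilde T^{\,k}f_k(U)\big|_{\sH}=T^{k}f_k(A)$ for all $k$ --- which is what you need in order to conclude $\|\rho\times T(X)\|\le\|X\|_{\scC}$ --- simply does not follow from lifting one intertwiner. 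Arranging the dilation so that covariance and the (co-)invariance of $\sH$ are preserved simultaneously is the substantive content of the Davidson--Katsoulis theorem, and it is carried out there by an explicit multi-step construction rather than by a bare appeal to commutant lifting. Two smaller points: finiteness of the Blaschke product is used in one specific place (it forces $\vpi(\bbT)=\bbT$, hence $\vpi(U)$ unitary), not ``throughout''; and you should verify, not just assert, that the relation $U\widetilde T=\widetilde T\vpi(U)$ upgrades to $f(U)\widetilde T=\widetilde T\,(f\circ\vpi)(U)$ for all $f\in\sA(\bbD)$ (this does hold, by approximating $f$ by polynomials in $U$, but it is a step of the argument). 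As it stands, the proposal identifies the right strategy but leaves the essential dilation unestablished.
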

It is also shown in \cite{DK} that these two algebras have the same $C^*$-envelope (\cite{DK} Corollary 2.4), though we will not need that result.

Furthermore, let $\al, \be$ be endomorphisms of $\sA(\bbD)$ given by $\al(f) = f\circ\vpi$, $\be(f) = f\circ\psi$ where $\psi = \tau^{-1}\circ\vpi\circ\tau$ and $\tau$ is a conformal map of $\bbD$. We have the following result as in the case of automorphisms (see \cite{HPW}).

\begin{theorem} \label{t:conjugate}
Define $\Phi:\scD\rightarrow\sA(\bbD)\times_{\be}\bbZ^+$ on the dense subalgebra of polynomials by $\Phi(\sum U_{\al}^kf_k) = \sum U_{\be}^kf_k\circ\tau$.
 Then $\Phi$ is an isometric isomorphism on the subalgebra, and hence extends to a complete isometric isomorphism of the semicrossed products.
\end{theorem}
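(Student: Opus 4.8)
The plan is to treat $\Phi$ in two stages: first as a purely algebraic isomorphism of the polynomial algebras, and then as a complete isometry via the representation-theoretic definition of the norm. For the algebraic part, recall that multiplication in $\scD$ is governed by $fU_{\al} = U_{\al}\al(f)$, so that $U_{\al}^j f_j \cdot U_{\al}^k g_k = U_{\al}^{j+k}\bigl(\al^k(f_j)\,g_k\bigr)$ with $\al^k(f) = f\circ\vpi^{\circ k}$, and analogously in $\sA(\bbD)\times_\be\bbZ^+$ with $\be^k(h) = h\circ\psi^{\circ k}$. Comparing $\Phi$ of a product against the product of the images, the homomorphism property collapses to the single identity $\al^k(f)\circ\tau = \be^k(f\circ\tau)$ for all $f$ and $k$. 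Since $\psi = \tau^{-1}\circ\vpi\circ\tau$ gives $\tau\circ\psi = \vpi\circ\tau$, and hence $\vpi^{\circ k}\circ\tau = \tau\circ\psi^{\circ k}$, this identity is immediate. Note also that $\psi$, being a composition of finite Blaschke products, is again a finite Blaschke product, so the target algebra has the required form, and $\Phi$ is a bijection of $\sP$ onto the target polynomials, with inverse given by the same recipe using $\tau^{-1}$.

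For the metric statement, recall that the norm on $\scD$ is the supremum, over all contractive covariant pairs $(A,T)$ (that is, $\|A\|,\|T\|\le 1$ with $AT = T\vpi(A)$), of $\bigl\|\sum T^k f_k(A)\bigr\|$, and similarly for $\be$. The idea is to set up a norm-preserving bijection between the covariant pairs of the two systems. Writing $\tau(z) = e^{i\theta}\tfrac{z-a}{1-\ol{a}z}$ with $|a|<1$, I would, given a contractive pair $(A,T)$ for $\psi$, put $A' = \tau(A) = e^{i\theta}(A-aI)(I-\ol{a}A)^{-1}$; this is well defined since $\|\ol{a}A\|<1$, and a disk automorphism carries contractions to contractions, so $A'$ is again a contraction. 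The claim is that $(A',T)$ is a contractive covariant pair for $\vpi$.

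The heart of the matter is transferring the covariance relation through the functional calculus, and this is the step I expect to be the main obstacle. From $AT = T\psi(A)$ one gets $A^nT = T\psi(A)^n$ by induction, and then $(A-\la I)^{-1}T = T(\psi(A)-\la I)^{-1}$ for $\la$ off the spectra (which lie in $\ol{\bbD}$); hence $g(A)T = T\,g(\psi(A))$ for every rational $g$ with poles off $\ol{\bbD}$, in particular for $g = \tau$. Thus $\tau(A)T = T\,\tau(\psi(A))$. Combining this with $\vpi(\tau(A)) = (\vpi\circ\tau)(A) = (\tau\circ\psi)(A) = \tau(\psi(A))$ yields $A'T = \tau(A)T = T\,\tau(\psi(A)) = T\,\vpi(\tau(A)) = T\vpi(A')$, which is exactly the $\vpi$-covariance. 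With this in hand, evaluating $F = \sum U_{\al}^k f_k$ in the pair $(A',T)$ produces $\sum T^k f_k(\tau(A)) = \sum T^k (f_k\circ\tau)(A)$, which is precisely $\Phi(F)$ evaluated in $(A,T)$.

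Finally, because $\tau^{-1}$ is again a disk automorphism conjugating $\vpi$ back to $\psi$, the assignment $(A,T)\mapsto(\tau(A),T)$ is a bijection between the contractive covariant pairs for $\psi$ and those for $\vpi$, matching the value of $F$ with that of $\Phi(F)$ throughout. Taking suprema gives $\|\Phi(F)\|_{\be} = \|F\|_{\al}$, so $\Phi$ is isometric on $\sP$. Running the identical argument on operator matrices $[F_{ij}]$ — equivalently, observing that composing a faithful representation of the target with the pair $(\tau(A),T)$ shows $\Phi$, and symmetrically $\Phi^{-1}$, to be completely contractive — upgrades this to a complete isometry. Since $\sP$ is dense, $\Phi$ then extends uniquely to a completely isometric isomorphism of $\scD$ onto $\sA(\bbD)\times_\be\bbZ^+$.
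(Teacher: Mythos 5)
Your proposal is correct and follows essentially the same route as the paper: both establish the isometry by the bijection $(A,T)\leftrightarrow(\tau^{\pm1}(A),T)$ between contractive covariant pairs for $\vpi$ and for $\psi$, matching the values of $F$ and $\Phi(F)$ and taking suprema. You supply slightly more detail than the paper does (the paper cites \cite{HPW} for the homomorphism property and simply asserts the covariance transfer $\tau^{-1}(A)T=T\tau^{-1}(\vpi(A))$, which you justify via the rational functional calculus), but the underlying argument is the same.
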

\begin{proof}
The proof that shows $\Phi$ is an algebra homomorphism is similar to \cite{HPW} Lemma 10. Thus, we need only show that $\Phi$ is isometric.
Let $(A, T)$ be a pair of contractions satisfying $AT = T\vpi(A)$. Then there is a contractive representation $\rho$ of $\sA(\bbD)$ such that $\rho(f) = f(A)$. 
Notice that $\tau^{-1}(A)T = T\tau^{-1}(\vpi(A)) = T\psi(\tau^{-1}(A))$. Then there is a contractive representation $\si$ of $\sA(\bbD)$ such that $\si(f) = f(\tau^{-1}(A))$.
Therefore,
\begin{align*}
\si\times T(\Phi(\sum U_{\al}^kf_k)) &= \si\times T(\sum U_{\be}^kf_k\circ\tau)\\
&= \sum T^k\si(f_k\circ\tau)\\
&= \sum T^kf_k(A)\\
&= \sum T^k\rho(f_k)\\
&= \rho\times T(\sum U_{\al}^kf_k).
\end{align*}
The same argument shows that for each covariant representation $\si\times T$ of $\sA(\bbD)\times_{\be}\bbZ^+$, we can find a corresponding covariant representation $\rho\times T$ of $\scD$ so that the above equation holds. Hence, $||\sum U_{\al}^kf_k|| = ||\sum U_{\beta}^kf_k\circ\tau||,$ and furthermore this is a complete isometric isomorphism.
\end{proof}

\section{Finite Blaschke Products}

Let $\mathbb{D}$ be the open unit disc and let $\mathbb{T}$ be the unit circle.
A finite Blaschke product $\varphi$ is a rational function of the form
\begin{equation}
\varphi(z) = u~\displaystyle\prod^{N}_{i=1}\frac{z-a_i}{1-\bar{a}_iz} \label{e:1}
\end{equation}
where $|u|=1$ and $|a_i| < 1$ for $i=1, 2, \dots, N$. In this paper, we assume that all functions $\varphi$ are finite Blaschke products.

Any finite Blaschke product is an analytic self-mapping of $\mathbb{D}$ and maps its boundary $\mathbb{T}$ onto itself. 
Moreover, it is an $N$-to-one function which preserves orientation. 
If $N=1$, then $\varphi$ is a conformal mapping. We say that $\varphi$ is non-trivial if $N>1$.
Any non-trivial finite Blaschke product is a local homeomorphism. The finite Blaschke products here will be non-trivial.

By the Schwarz-Pick Lemma, any finite Blaschke product $\varphi$ can have at most one fixed point in $\mathbb{D}$. If such a point exists, it must be an attracting fixed point and we call it the Denjoy-Wolff point. If $\varphi$ is a finite Blaschke product with $N$ factors and has a fixed point in $\mathbb{D}$, then $\varphi$ has at most $N-1$ fixed points on $\mathbb{T}$.

If $\varphi$ has no fixed points in $\mathbb{D}$, the Denjoy-Wolff Theorem states that there is a fixed point $z_0\in\mathbb{T}$ such that $0 < \varphi'(z_0)\leq 1$. This point is again called the Denjoy-Wolff point. Conversely, if $\varphi$ has a fixed point $z_0\in\mathbb{T}$ such that $0 < \varphi'(z_0)\leq 1$, then $\varphi$ has no fixed point in $\mathbb{D}$, and $z_0$ is the Denjoy-Wolff point of $\varphi$. Moreover, the proof of the Denjoy-Wolff Theorem guarantees that $\varphi$ can have at most one fixed point on $\mathbb{T}$ with $0 < \varphi'(z_0)\leq 1$. Therefore, the Denjoy-Wolff point is unique (see \cite{Shapiro} p.78). Thus, a finite Blaschke product $\varphi$ can be classified into three categories based on the Denjoy-Wolff point.

A finite Blaschke product $\varphi$ is said to be
\begin{itemize}
	\item[1.] elliptic if there exists a fixed point in $\mathbb{D}$;
	\item[2.] hyperbolic if the Denjoy-Wolff point $z_0\in\mathbb{T}$ is such that \mbox{$\varphi '(z_0) < 1$;}
	\item[3.] parabolic if  the Denjoy-Wolff point $z_0\in\mathbb{T}$ is such that \mbox{$\varphi '(z_0) = 1$.}
\end{itemize}

Denote the composition of $\varphi$ with itself $n$ times by $\varphi^n$. A point $z$ is said to be a recurrent point of $\varphi$ if there exists a sequence $\{n_k\}$ such that $\varphi^{n_k}(z)$ converges to $z$. Note that the Julia set is a subset of the closure of the set of recurrent points since the Julia set is the closure of the repelling periodic points. We know that the Julia set $\mathcal{J}$ of a finite Blaschke product is either $\mathbb{T}$ or a Cantor set of $\mathbb{T}$ (see \cite{Carleson-Gamelin} Example p.58).

Here we are interested in recurrent points in $\mathbb{T}$. If the Julia set of a finite Blaschke product is the unit circle, then the closure of the set of recurrent points of a finite Blaschke product is exactly the Julia set. Thus, recurrent points are dense in $\mathbb{T}$. \cite{BCH}, \cite{CDP} and \cite{Hamilton} give  necessary and sufficient conditions for the Julia set of a finite Blaschke product to equal  $\mathbb{T}$. First, we need the following definitions.

The hyperbolic distance $d$ in $\mathbb{D}$ is defined by
\begin{equation*}
d(z, w) = \log\frac{1+\frac{|z-w|}{|1-\overline{z}w|}}{1-\frac{|z-w|}{|1-\overline{z}w|}}
\end{equation*}
where $z, w\in{\mathbb{D}}$. A finite Blaschke product $\varphi$ is of \textit{zero hyperbolic step} if there exists $z\in\mathbb{D}$ such that $\displaystyle\lim_{n\rightarrow\infty}d(\varphi^n(z), \varphi^{n+1}(z)) = 0$, where $d$ is the hyperbolic distance in $\mathbb{D}$. If $\varphi$ is not of zero hyperbolic step, then 
$\displaystyle\lim_{n\rightarrow\infty}d(\varphi^n(z), \varphi^{n+1}(z)) > 0$ since $d(\varphi^n(z), \varphi^{n+1}(z))$ is non-increasing. In this case, we say $\varphi$ is of \textit{positive hyperbolic step.}

Let $(\mathbb{T}, \varphi, m)$ be a measure space where $m$ is normalized Lebesgue measure on $\mathbb{T}$. Then $\varphi$ is said to be ergodic if, whenever
 $\varphi^{-1}(E) = E$ up to measure zero, implies $m(E) = 0$ or $1$. Note that $\varphi$ is defined to be ergodic without assuming that $\varphi$ is measure-preserving,
 that is, $m(\varphi^{-1}(E)) = m(E)$ for any measurable set $E$.

\begin{theorem}[see \cite{BCH}, \cite{CDP} and \cite{Hamilton}] \label{t:juliasetT}
A finite Blaschke product $\varphi$ is of zero hyperbolic step if and only if its Julia set is exactly $\mathbb{T}$ if and only if it is $ergodic$.
\end{theorem}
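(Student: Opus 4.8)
The plan is to prove the two biconditionals by combining the Denjoy--Wolff trichotomy with the dichotomy for the Julia set recalled above (either $\mathcal{J}=\mathbb{T}$ or $\mathcal{J}$ is a Cantor subset of $\mathbb{T}$). First I would read off the hyperbolic step from the classification. Since $\varphi$ is a holomorphic self-map of $\mathbb{D}$, the Schwarz--Pick lemma makes $d(\varphi^n(z),\varphi^{n+1}(z))$ non-increasing, so the limit $s$ defining the step exists and (one checks) does not depend on the basepoint $z$. In the elliptic case the orbit $\varphi^n(z)$ converges to the interior fixed point, consecutive iterates collapse together, and $s=0$. In the hyperbolic case the condition $\varphi'(z_0)<1$ forces the orbit to descend toward $z_0$ with definite hyperbolic spacing, so $s>0$. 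The parabolic case $\varphi'(z_0)=1$ is precisely where both $s=0$ and $s>0$ occur, and must be separated by a finer, horocyclic analysis near $z_0$.

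Next I would connect the step to the Julia set through the attracting behaviour on the circle. The set $\mathbb{T}\setminus\mathcal{J}$ is open and completely invariant, and when it is nonempty its points are exactly those attracted to the boundary Denjoy--Wolff point. When $s>0$ the orbits stay hyperbolically separated, an attracting arc survives, $\mathbb{T}\setminus\mathcal{J}$ is nonempty, and the dichotomy then forces $\mathcal{J}$ to be a Cantor set; when $s=0$ no attracting arc can persist, the repelling behaviour spreads over the whole circle, and $\mathcal{J}=\mathbb{T}$. This gives the first biconditional, and shows that $\mathcal{J}=\mathbb{T}$ exactly in the elliptic and parabolic zero-step cases.

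For the equivalence with ergodicity I would use that the boundary map is nonsingular for $m$ and invoke the conservative/dissipative dichotomy for inner functions. When $\mathcal{J}=\mathbb{T}$ the repelling periodic points are dense, the recurrent points have full measure, the boundary map is conservative, and the ergodic theory of inner functions (the cited results) yields ergodicity. When $\mathcal{J}$ is a proper Cantor set the orbits are transient---almost every point escapes to the boundary Denjoy--Wolff point---so the map is totally dissipative and, by the same body of results, fails to be ergodic. Chaining the two biconditionals gives the theorem.

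The main obstacle is the parabolic case and, hand in hand with it, the conservative/dissipative split. At a neutral fixed point $\varphi'(z_0)=1$ the map is not uniformly expanding, the Schwarz--Pick contraction estimates degenerate, and deciding whether $s=0$---equivalently whether the boundary dynamics are conservative and ergodic rather than totally dissipative---requires exactly the delicate geometric and measure-theoretic analysis near $z_0$ carried out in \cite{BCH}, \cite{CDP}, and \cite{Hamilton}.
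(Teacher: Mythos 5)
This statement is one the paper does not prove at all: it is imported verbatim from the cited literature (\cite{BCH}, \cite{CDP}, \cite{Hamilton}) and used as a black box, so there is no in-paper argument to compare yours against. Judged on its own terms, your sketch is a faithful roadmap of how those references organize the proof --- the Denjoy--Wolff trichotomy to compute the hyperbolic step, the Julia-set dichotomy ($\mathcal{J}=\mathbb{T}$ or a Cantor set) to link the step to the Julia set, and the conservative/dissipative dichotomy for inner functions to handle ergodicity --- and nothing in it points in a wrong direction.

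But it is a roadmap, not a proof: every implication that carries real content is deferred to the very papers the theorem cites. Concretely: (i) the parabolic split between zero and positive step (the $\varphi''(z_0)=0$ criterion of \cite{CDP}, restated in the paper as Theorem~\ref{t:cdp1}) is acknowledged but not argued; (ii) the step ``when $s=0$ no attracting arc can persist, so $\mathcal{J}=\mathbb{T}$'' is asserted without a mechanism --- you would need to show that a Fatou arc on $\mathbb{T}$ forces orbits to march to the boundary fixed point with definite hyperbolic spacing; (iii) the ergodicity leg as written is circular: you derive conservativity from ``the recurrent points have full measure,'' but for a nonsingular map almost-everywhere recurrence \emph{is} conservativity, and establishing it in the parabolic zero-step case (where the map is not measure-preserving, as the paper itself notes) is exactly the hard analytic content of \cite{CDP} and \cite{Hamilton}; moreover ``conservative implies ergodic'' for boundary maps of inner functions is itself a nontrivial theorem, not a formality. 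So the proposal is acceptable as an annotated citation --- which is all the paper offers --- but it should not be mistaken for a self-contained argument for any of the three equivalences.
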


Moreover, \cite{BCH} and  \cite{CDP} explain when $\varphi$ is of zero or positive hyperbolic step according to the classification based on the Denjoy-Wolff point. 

\begin{theorem}[see \cite{BCH}] \label{t:bch1}
Let $\varphi$ be a finite Blaschke product.
\begin{enumerate}
	\item If $\varphi$ is elliptic, then it is of zero hyperbolic step.
	\item If $\varphi$ is hyperbolic, then it is of positive hyperbolic step.
\end{enumerate}
\end{theorem}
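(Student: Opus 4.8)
The plan is to treat the two cases separately, controlling the orbit by the Denjoy--Wolff point. Throughout I would write $z_n = \varphi^n(z)$ and use that, by the Schwarz--Pick Lemma, the sequence $a_n := d(z_n, z_{n+1})$ is non-increasing in $n$ (as already noted just after the definition of hyperbolic step) and hence convergent; the whole point is to decide whether $\lim_n a_n$ is $0$ or strictly positive. Case (1) is then essentially immediate: if $\varphi$ is elliptic it has a fixed point $p \in \bbD$, and since zero hyperbolic step only requires \emph{some} $z$ with $\lim_n d(\varphi^n(z), \varphi^{n+1}(z)) = 0$, I would simply take $z = p$, so that $\varphi^n(p) = p$ for all $n$ and $a_n \equiv 0$. (The same conclusion holds for any starting point, since by the Denjoy--Wolff theorem $z_n \to p$ and the hyperbolic and Euclidean metrics are comparable near the interior point $p$, forcing $a_n \to 0$.)

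Case (2) is the substantive one. Suppose $\varphi$ is hyperbolic with Denjoy--Wolff point $z_0 \in \bbT$ and $\lambda := \varphi'(z_0) \in (0,1)$. The key input would be Julia's Lemma at the boundary fixed point $z_0$: with the horocyclic function
\[ h(z) = \log\frac{|1 - \overline{z_0}\,z|^2}{1 - |z|^2}, \]
Julia's Lemma gives $h(\varphi(z)) \le h(z) + \log\lambda$ for every $z \in \bbD$. Applying this along the orbit yields, for all $n$,
\[ h(z_n) - h(z_{n+1}) \ge \log\tfrac{1}{\lambda} > 0. \]
To convert this into a lower bound on $a_n$ I would use that $h$ is $1$-Lipschitz for the hyperbolic metric, $|h(z) - h(w)| \le d(z,w)$. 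This holds because $h$ is the Busemann (horofunction) at $z_0$: a short computation shows
\[ h(z) = \lim_{t \to 1^-}\bigl[\,d(z, t z_0) - d(0, t z_0)\,\bigr], \]
and each map $z \mapsto d(z, t z_0) - d(0, t z_0)$ is $1$-Lipschitz (a distance minus a constant), so the pointwise limit $h$ is too. Combining the two displays gives $a_n = d(z_n, z_{n+1}) \ge |h(z_n) - h(z_{n+1})| \ge \log(1/\lambda) > 0$ for all $n$ and all $z$, whence $\varphi$ is of positive hyperbolic step.

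The routine ingredients are the monotonicity of $a_n$ and the Lipschitz/Busemann property of $h$. The one genuine input is Julia's Lemma with the sharp constant $\lambda = \varphi'(z_0)$ at the Denjoy--Wolff point, and the main obstacle the argument must overcome is that both $z_n$ and $z_{n+1}$ run off to the boundary $\bbT$, where the metric $d$ degenerates and a naive estimate of $a_n$ is hard to bound below. The horocyclic function $h$ is precisely the device that circumvents this: it records a definite amount of hyperbolic progress toward $z_0$ (a drop of at least $\log(1/\lambda)$ at each step), and its $1$-Lipschitz property transfers that drop into a lower bound on $a_n$ that is uniform in $n$.
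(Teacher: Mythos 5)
The paper offers no proof of this statement at all---it is quoted as a known result from \cite{BCH} (and the hyperbolic-step dichotomy is really due to the iteration theory of Contreras, D\'{\i}az-Madrigal and Pommerenke)---so there is nothing internal to compare against; what matters is whether your self-contained argument is sound, and it is. Part (1) is indeed immediate from the paper's ``there exists $z$'' formulation of zero hyperbolic step by taking $z$ to be the interior fixed point, and your parenthetical covers the stronger ``for all $z$'' version correctly, since for $N>1$ the interior fixed point is strictly attracting. For part (2), your use of Julia's Lemma at the Denjoy--Wolff point with the sharp constant $\lambda=\varphi'(z_0)<1$, giving $h(\varphi(z))\le h(z)+\log\lambda$ for the horocyclic function $h$, is the right tool, and the identification of $h$ as the Busemann limit $\lim_{t\to1^-}[d(z,tz_0)-d(0,tz_0)]$ does check out with the paper's normalization $d(z,w)=\log\frac{1+\rho}{1-\rho}$, $\rho=\frac{|z-w|}{|1-\bar z w|}$ (one computes $1-\rho^2=\frac{(1-|z|^2)(1-|w|^2)}{|1-\bar z w|^2}$ and passes to the limit), so $h$ is $1$-Lipschitz for $d$ and $d(z_n,z_{n+1})\ge h(z_n)-h(z_{n+1})\ge\log(1/\lambda)>0$ uniformly in $n$. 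The only cosmetic point worth making explicit is that for a finite Blaschke product $\varphi$ extends analytically across $\bbT$, so $\varphi'(z_0)$ is an honest derivative rather than merely an angular derivative, which makes the application of Julia's Lemma entirely clean. This is a complete and essentially standard proof of the cited result.
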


\begin{theorem}[see \cite{CDP}] \label{t:cdp1}
If $\varphi$ is a parabolic finite Blaschke product with Denjoy-Wolff point $1$, then $\varphi$ is of zero hyperbolic step if and only if $\varphi''(1) = 0$ if and only if $\displaystyle\sum^N_{i=1}\frac{1-|a_i|^2}{|1-a_i|^2} \text{Im}(a_i) = 0$ where $a_i$'s are as in $(\ref{e:1})$.
\end{theorem}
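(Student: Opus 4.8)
Label the three assertions as (1) $\varphi$ is of zero hyperbolic step, (2) $\varphi''(1)=0$, and (3) the vanishing of the displayed sum. The plan is to prove $(2)\Leftrightarrow(3)$ by a direct computation with the $a_i$, and $(1)\Leftrightarrow(2)$ by passing to the half-plane normal form for parabolic iteration and reading off the hyperbolic step from the translation number.

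First I would record the parabolic normalization and compute $\varphi''(1)$ explicitly. Since the Denjoy--Wolff point is $1$, we have $\varphi(1)=1$ and $\varphi'(1)=1$. Logarithmic differentiation of $(\ref{e:1})$ gives
\[ \frac{\varphi'(z)}{\varphi(z)} = \sum_{i=1}^N \frac{1-|a_i|^2}{(z-a_i)(1-\bar{a}_i z)}, \]
so evaluating at $z=1$ turns $\varphi'(1)=1$ into the constraint $\sum_{i=1}^N \frac{1-|a_i|^2}{|1-a_i|^2}=1$. Differentiating once more and using $\varphi''=\varphi\big((\varphi'/\varphi)'+(\varphi'/\varphi)^2\big)$ expresses $\varphi''(1)$ explicitly in the $a_i$; the real part of the resulting sum is cancelled by the constraint just found, leaving $\varphi''(1)$ a purely imaginary number whose vanishing is exactly condition (3). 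In particular $\operatorname{Re}\varphi''(1)=0$ for every parabolic finite Blaschke product, which is the expected sign restriction. This settles $(2)\Leftrightarrow(3)$.

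Next I would prove $(1)\Leftrightarrow(2)$ by conjugating to the right half-plane. Let $C(z)=\frac{1+z}{1-z}$, a hyperbolic isometry carrying $\bbD$ onto the right half-plane $H$ and the Denjoy--Wolff point $1$ to $\infty$, and set $\Phi=C\circ\varphi\circ C^{-1}$. Taylor-expanding $\varphi$ at $1$ with $\varphi'(1)=1$ yields the parabolic normal form $\Phi(w)=w+\varphi''(1)+O(1/w)$ as $w\to\infty$, so the translation number is precisely $b=\varphi''(1)$, purely imaginary by the previous step. Because $C$ is an isometry, $d(\varphi^n(z),\varphi^{n+1}(z))=d_H(w_n,\Phi(w_n))$ with $w_n=\Phi^n(C(z))\to\infty$, and in the metric on $H$ (comparable to $|dw|/\operatorname{Re}(w)$) the size of each step is governed by how the orbit escapes: if $b=0$ the orbit escapes with $\operatorname{Re}(w_n)\to\infty$ and the steps tend to $0$; if $b\neq0$ the orbit drifts in the imaginary direction while staying within bounded hyperbolic distance of a horocycle at $1$, so $\operatorname{Re}(w_n)$ remains controlled and the steps stay bounded away from $0$.

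The main obstacle is exactly this last dichotomy. The leading-order estimate $w_n\approx w_0+nb$ does not by itself suffice: when $b\neq0$ one must show that the accumulated real part of the $O(1/w)$ error does not drive $\operatorname{Re}(w_n)$ to infinity, and when $b=0$ one must produce the genuine escape rate (here $|w_n|$ behaves like $\sqrt{n}$). This is precisely the content of the Baker--Pommerenke and Cowen analysis of parabolic self-maps; for a finite Blaschke product the inner, measure-theoretic structure---also reflected in Theorem~\ref{t:juliasetT}---pins the escape behaviour down. I would invoke that machinery rather than reprove it, so that the whole theorem reduces to the explicit evaluation of $\varphi''(1)$ carried out in the first step.
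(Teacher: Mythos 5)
This theorem is quoted from \cite{CDP}; the paper supplies no proof of it, so there is no in-house argument to compare yours against. What you have written is essentially a reconstruction of the route in \cite{CDP} (resting on the Baker--Pommerenke/Cowen theory of parabolic iteration): pass to the half-plane model $\Phi = C\circ\varphi\circ C^{-1}$, identify the translation number as $b=\varphi''(1)$, observe that $b$ is purely imaginary for a parabolic finite Blaschke product, and decide the hyperbolic step from whether $\operatorname{Re}(w_n)$ escapes to infinity. Your normalization $b=\varphi''(1)$ and the observation $\operatorname{Re}\varphi''(1)=0$ are both correct.

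Two points need attention. First, if you actually execute the computation you sketch for $(2)\Leftrightarrow(3)$, logarithmic differentiation gives $\varphi'(z)/\varphi(z)=\sum_i (1-|a_i|^2)/\bigl((z-a_i)(1-\bar a_i z)\bigr)$, and after imposing the parabolic constraint $\sum_i (1-|a_i|^2)/|1-a_i|^2=1$ one finds
$\varphi''(1)=-2i\sum_{i=1}^{N}\frac{(1-|a_i|^2)\operatorname{Im}(a_i)}{|1-a_i|^{4}}$,
with $|1-a_i|^{4}$, not $|1-a_i|^{2}$, in the denominator. So the vanishing of $\varphi''(1)$ is \emph{not} literally the displayed condition (3); your assertion that the computation yields ``exactly condition (3)'' glosses over this. (The discrepancy is most plausibly a transcription slip in the stated theorem, but as written your argument would not prove equivalence with the printed sum, and you should either carry the computation through and flag the correction or verify the exact form in \cite{CDP}.) Second, for $(1)\Leftrightarrow(2)$ you correctly isolate the crux --- when $b\neq0$ is purely imaginary one must show the accumulated real parts of the $O(1/w)$ errors, a priori a divergent harmonic-type sum, do not drive $\operatorname{Re}(w_n)$ to infinity, and when $b=0$ one must produce the genuine $\sqrt{n}$ escape --- but you then invoke the Baker--Pommerenke/Cowen/\cite{CDP} machinery wholesale. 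Given that the paper itself cites the entire theorem, deferring that dichotomy is a defensible level of rigor, but it means your proposal genuinely establishes only the elementary equivalence $(2)\Leftrightarrow(3)$ (in its corrected form), with $(1)\Leftrightarrow(2)$ still outsourced.
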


Thus, we have the following lemma.
\begin{lemma} \label{l:zerostep}
If $\varphi$ is either elliptic or parabolic with zero hyperbolic step, then 
\begin{enumerate}
	\item the recurrent points of $\varphi$ are dense in $\mathbb{T}$;
	\item for any open set $\sO\subset\mathbb{T}$, there is an $N\in\mathbb{N}$ such that $\varphi^N(\sO) = \mathbb{T}$.
\end{enumerate}
\end{lemma}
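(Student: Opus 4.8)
The plan is to reduce both hypotheses to the single topological fact that the Julia set $\mathcal{J}$ of $\varphi$ is all of $\mathbb{T}$, and then to read off the two conclusions from the dynamics of $\varphi|_{\mathbb{T}}$. By Theorem~\ref{t:bch1}(1) an elliptic $\varphi$ has zero hyperbolic step, while a parabolic $\varphi$ of zero hyperbolic step has it by assumption; in either case Theorem~\ref{t:juliasetT} gives $\mathcal{J}=\mathbb{T}$. For part (1), recall that $\mathcal{J}$ is the closure of the repelling periodic points and that every periodic point is recurrent (if $\varphi^{p}(z)=z$ then $\varphi^{kp}(z)\to z$). Writing $R$ for the set of recurrent points, this yields $\mathcal{J}\subseteq\overline{R}\subseteq\mathbb{T}$, and since $\mathcal{J}=\mathbb{T}$ the closure of $R$ is all of $\mathbb{T}$, which is exactly the assertion.

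For part (2) I first reduce to producing a single $N$ with $\varphi^{N}(\sO)\supseteq\mathbb{T}$: since $\varphi(\mathbb{T})=\mathbb{T}$ forces $\varphi^{N}(\sO)\subseteq\mathbb{T}$, this already gives $\varphi^{N}(\sO)=\mathbb{T}$. Because the repelling periodic points are dense in $\mathcal{J}=\mathbb{T}$, the open set $\sO$ contains such a point $w$, of some period $p$. I then pass to $g=\varphi^{p}$, which has degree at least $2$ and Julia set again equal to $\mathbb{T}$, and for which $w$ is now a repelling fixed point; it suffices to find $K$ with $g^{K}(\sO)=\mathbb{T}$ and set $N=pK$.

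Since $w$ is repelling for the circle map $g$ (so $|g'(w)|>1$) and $g$ is a local homeomorphism of $\mathbb{T}$, I can choose an open arc $V$ with $w\in V\subseteq\sO$ on which $g$ is injective and $g(V)\supseteq\overline{V}$; the arcs $g^{k}(V)$ are then nested, $V\subseteq g(V)\subseteq g^{2}(V)\subseteq\cdots$. The blow-up property of the Julia set — the backward orbit of any point of $\mathcal{J}$ is dense in $\mathcal{J}$, no point of $\mathcal{J}=\mathbb{T}$ being exceptional — shows that for each $\zeta\in\mathbb{T}$ some backward iterate of $\zeta$ under $g$ meets $V$, so that $\zeta\in g^{k}(V)$ for some $k$; hence $\bigcup_{k}g^{k}(V)=\mathbb{T}$. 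This is an increasing union of open arcs covering the compact circle, so a finite subcover together with the nesting gives $g^{K}(V)=\mathbb{T}$ for some $K$, whence $g^{K}(\sO)=\mathbb{T}$.

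The main obstacle is precisely this final upgrade, from ``the forward images eventually cover $\mathbb{T}$'' (a union over all iterates, which is all that density of backward orbits provides) to ``a single forward image is already $\mathbb{T}$'' (topological exactness). The device that closes this gap is the local expansion at a repelling periodic point, which makes the relevant images nested and lets compactness finish; this is the reason for passing to the iterate $g=\varphi^{p}$, turning the periodic point into a genuine fixed point so that the chain $g^{k}(V)$ is monotone.
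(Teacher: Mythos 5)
Your proof is correct, and for part (1) it follows the paper exactly: both arguments reduce to $\mathcal{J}=\mathbb{T}$ via Theorems~\ref{t:juliasetT}, \ref{t:bch1} and \ref{t:cdp1}, and then use that $\mathcal{J}$ is the closure of the (repelling) periodic points, which are recurrent. The divergence is in part (2): the paper simply cites the blow-up property of Julia sets (\cite{Carleson-Gamelin}, Theorem 3.2), which states outright that a single forward image $\varphi^{N}(\sO)$ of an open set meeting $\mathcal{J}$ contains $\mathcal{J}$, whereas you reprove that theorem in the circle setting. Your derivation is sound: you correctly identify that the density of backward orbits only yields $\bigcup_{k}\varphi^{k}(\sO)\supseteq\mathbb{T}$, and you close the gap to a single iterate by passing to $g=\varphi^{p}$ so that a repelling periodic point of $\sO$ becomes a repelling fixed point, extracting an arc $V$ with $g(V)\supseteq\overline{V}$ (possible since $g'\neq 0$ on $\mathbb{T}$ and $|g'(w)|>1$), and using monotonicity of $g^{k}(V)$ plus compactness. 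What the paper's route buys is brevity; what yours buys is self-containment modulo two more elementary standard facts (density of repelling periodic points in $\mathcal{J}$, and density of backward orbits of non-exceptional points), together with an explicit explanation of where topological exactness actually comes from. The only point worth flagging is that the ``no exceptional points on $\mathbb{T}$'' step deserves a one-line justification (exceptional points of a rational map of degree at least $2$ lie in the Fatou set), but this is standard and does not affect correctness.
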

\begin{proof}
(1) is clearly from Theorem \ref{t:juliasetT}, \ref{t:bch1}, and  \ref{t:cdp1}. Also, since the Julia set of $\varphi$ is $\mathbb{T}$, (2) is satisfied (see \cite {Carleson-Gamelin} Theorem 3.2).
\end{proof}

One might ask in the case where the Julia set of a finite Blaschke product is not the unit circle, namely, a Cantor set of $\mathbb{T}$, what is the closure of the set of recurrent points of a finite Blaschke product? By making use of the result from \cite{CDP}, we show that in fact it is equal to the union of the Julia set and the Denjoy-Wolff point. First, we provide the result needed to complete our proof.

\begin{theorem}[see \cite{CDP}] \label{t:cpd2}
Suppose that $\varphi$ is a finite Blaschke product.
If $\varphi$ is either  hyperbolic or parabolic with positive hyperbolic step, then $(\varphi^n(z))$ converges to the Denjoy-Wolff point of $\varphi$, for almost every $z\in\mathbb{T}$.
\end{theorem}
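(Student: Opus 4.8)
The plan is to pass from the statement about boundary orbits to the complex-dynamical picture of $\varphi$ as a rational self-map of $\hat{\mathbb C}$ and to exploit that the hypothesis forces $\varphi$ to have a small Julia set. First I would record, using Theorems~\ref{t:bch1}, \ref{t:cdp1} and~\ref{t:juliasetT}, that a hyperbolic or parabolic-positive-step $\varphi$ is of positive hyperbolic step, so its Julia set $\mathcal J$ is a Cantor subset of $\mathbb T$ rather than all of $\mathbb T$. Since a finite Blaschke product satisfies the symmetry $\varphi(1/\bar z)=1/\overline{\varphi(z)}$, both $\mathbb D$ and the exterior disc lie in the Fatou set and $\mathcal J\subset\mathbb T$. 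The Denjoy--Wolff point $z_0$ is the unique fixed point on $\overline{\mathbb D}$ that is not repelling: in the hyperbolic case ($\varphi'(z_0)<1$) it is an attracting fixed point lying in the Fatou set, and in the parabolic case ($\varphi'(z_0)=1$, $\varphi''(z_0)\neq0$) it is a parabolic fixed point whose attracting petals lie in the Fatou set.

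Second, I would argue that $m(\mathcal J)=0$. In the hyperbolic case all critical points lie off $\mathbb T$ and their forward orbits converge to the attracting point $z_0\notin\mathcal J$, so the post-critical set is compactly contained in the Fatou set; thus $\varphi$ is uniformly expanding on $\mathcal J$, and an expanding rational map whose Julia set is not the whole sphere has $\mathcal J$ of Lebesgue measure zero. In the parabolic case the critical orbits converge to the parabolic point $z_0\in\mathcal J$ and $\varphi$ is geometrically finite, for which the same measure-zero conclusion holds (by Denker--Urba\'nski-type results). Either way $\mathbb T\setminus\mathcal J$ has full Lebesgue measure.

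Third, I would show every point of $\mathbb T\setminus\mathcal J$ is attracted to $z_0$. By Sullivan's no-wandering-domains theorem every Fatou component is eventually periodic, and by the classification of periodic Fatou components together with the fact that $z_0$ is the only non-repelling cyclic behaviour of such a $\varphi$ (there are no Siegel discs or Herman rings, and no attracting or parabolic cycle other than $z_0$), every Fatou component eventually maps into the immediate basin, respectively a petal, of $z_0$. The Denjoy--Wolff theorem guarantees that $\mathbb D$ itself lies in this basin, so the orbit of every Fatou point, and in particular of every $\zeta\in\mathbb T\setminus\mathcal J$, converges to $z_0$. Combined with $m(\mathcal J)=0$, this yields $\varphi^n(\zeta)\to z_0$ for almost every $\zeta\in\mathbb T$.

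The main obstacle is the measure-zero assertion for $\mathcal J$, especially in the parabolic case, where $\varphi$ fails to be uniformly expanding because the post-critical set accumulates at $z_0\in\mathcal J$; this is where the hardest external input enters. A cleaner but only-in-measure route to the concentration is available through harmonic measure: since each $\varphi^n$ restricts to the continuous circle map $(\varphi|_{\mathbb T})^n$ coming from an inner function, one has $(\varphi^n)_*\omega_w=\omega_{\varphi^n(w)}$ for $w\in\mathbb D$, and $\varphi^n(w)\to z_0$ forces $\omega_{\varphi^n(w)}\to\delta_{z_0}$ weakly; as $\omega_w$ is mutually absolutely continuous with $m$, this gives $\operatorname{dist}(\varphi^n(\cdot),z_0)\to0$ in $L^1$ and hence in measure. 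The remaining work is then exactly the upgrade from convergence in measure to convergence almost everywhere, which is precisely the content supplied by the Fatou/Julia structure above.
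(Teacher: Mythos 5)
First, a point of comparison: the paper does not prove this statement at all --- it is quoted from \cite{CDP} --- so there is no internal proof to measure you against. Judged on its own terms, your architecture (by Theorem~\ref{t:juliasetT} the Julia set $\mathcal J$ is a Cantor subset of $\mathbb T$; show $m(\mathcal J)=0$; show every point of $\mathbb T\setminus\mathcal J$ is attracted to $z_0$) is coherent, and your third step is sound: all $2N-2$ critical orbits converge to $z_0$ by the Denjoy--Wolff theorem and the symmetry $\varphi(1/\bar z)=1/\overline{\varphi(z)}$, which rules out any non-repelling cycle other than $z_0$ and any Siegel disc or Herman ring, so Sullivan's theorem plus the classification of periodic Fatou components does force $\varphi^n(\zeta)\to z_0$ for every $\zeta\in\mathbb T\setminus\mathcal J$.

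The genuine gap is your second step, and you have half-identified it yourself. The theorems you invoke for expanding and geometrically finite rational maps assert that $\mathcal J$ has zero \emph{area} (or Hausdorff dimension $<2$). That is vacuous here, since every subset of $\mathbb T$ has zero area; what you need is that $\mathcal J$ has zero \emph{arc-length} measure $m$. In the hyperbolic case this is rescuable --- the postcritical set is compactly contained in the Fatou set, so $\varphi$ is uniformly expanding on $\mathcal J$, and a bounded-distortion/porosity argument shows that a Cantor Julia set inside $\mathbb T$ has a gap of proportional length at every scale, whence $m(\mathcal J)=0$ --- but that is a different statement from the one you cite and must be argued. In the parabolic case the expansion degenerates at $z_0\in\mathcal J$, none of the results you point to actually deliver the one-dimensional conclusion, and your fallback harmonic-measure argument, as you concede, only yields convergence in measure. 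Note also that the paper's own logic runs the other way: Remark~\ref{r:measjuliaset} \emph{deduces} $m(\mathcal J)=0$ from this theorem, so your proof must supply that fact independently, and it is exactly the part left open. The proof in \cite{CDP} (see also Bourdon, Matache and Shapiro, ``On convergence to the Denjoy--Wolff point'') bypasses the Julia set entirely: one proves that $\varphi^n\to z_0$ a.e.\ on $\mathbb T$ if and only if $\sum_n\bigl(1-|\varphi^n(0)|\bigr)<\infty$, via Poisson-kernel estimates and a Borel--Cantelli argument, and then checks summability in the hyperbolic and parabolic positive-hyperbolic-step cases. That route is more elementary and is precisely what upgrades convergence in measure to convergence almost everywhere.
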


\begin{remark} \label{r:measjuliaset}
This theorem together with Theorem~\ref{t:juliasetT} imply that the normalized Lebesgue measure of the Julia set of finite Blaschke products is either 0 or 1. As a result, if the Julia set is a Cantor set, the Lebesgue measure of the Julia set is equal to zero.
\end{remark}

\begin{theorem} \label{t:recpoints}
If $\varphi$ is either hyperbolic or parabolic with positive hyperbolic step, then the closure of the set of recurrent points of a finite Blaschke product is the union of the Julia set and the Denjoy-Wolff point.
\end{theorem}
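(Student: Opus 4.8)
The plan is to prove the two set inclusions separately, writing $\mathcal{J}$ for the Julia set, $z_0$ for the Denjoy-Wolff point, and $R$ for the set of recurrent points in $\mathbb{T}$. Since $\mathcal{J}$ is closed and we only adjoin the single point $z_0$, the set $\mathcal{J}\cup\{z_0\}$ is closed; hence the inclusion $\overline{R}\subseteq\mathcal{J}\cup\{z_0\}$ will follow as soon as we establish $R\subseteq\mathcal{J}\cup\{z_0\}$, while for the reverse inclusion it is enough to exhibit a dense set of recurrent points inside $\mathcal{J}\cup\{z_0\}$.

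For the inclusion $\mathcal{J}\cup\{z_0\}\subseteq\overline{R}$, I would first recall that every repelling periodic point is recurrent: if $\varphi^p(w)=w$ then $\varphi^{kp}(w)=w\to w$. As $\mathcal{J}$ is by definition the closure of the repelling periodic points (as noted before Theorem~\ref{t:juliasetT}), this gives $\mathcal{J}\subseteq\overline{R}$. Moreover $z_0$ is a fixed point, so $\varphi^n(z_0)=z_0$ for all $n$ and $z_0\in R\subseteq\overline{R}$. This settles the easy direction.

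The substance is the inclusion $R\subseteq\mathcal{J}\cup\{z_0\}$, and the key claim is that \emph{every} orbit in the Fatou set converges to $z_0$. Since $\varphi$ is hyperbolic or parabolic it has no fixed point in $\mathbb{D}$, so the Denjoy-Wolff theorem gives $\varphi^n\to z_0$ uniformly on compact subsets of $\mathbb{D}$; in particular $\{\varphi^n\}$ is normal on $\mathbb{D}$, so $\mathbb{D}$ lies in the Fatou set $\mathcal{F}=\widehat{\mathbb{C}}\setminus\mathcal{J}$. Because $\varphi$ has positive hyperbolic step (by hypothesis in the parabolic case, and by Theorem~\ref{t:bch1} in the hyperbolic case), Theorem~\ref{t:juliasetT} shows $\mathcal{J}\neq\mathbb{T}$, so $\mathcal{J}\subseteq\mathbb{T}$ is a Cantor set. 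Each $z\in\mathcal{F}\cap\mathbb{T}$ then has a spherical neighborhood contained in the open set $\mathcal{F}$ that meets $\mathbb{D}$; as $\mathbb{D}$ is connected this forces $z$ into the single Fatou component $\Omega_0\supseteq\mathbb{D}$. On $\Omega_0$ the iterates form a normal family, and any locally uniform limit of a subsequence $\varphi^{n_k}$ is a holomorphic map $\Omega_0\to\widehat{\mathbb{C}}$ equal to the constant $z_0$ on $\mathbb{D}$; by the identity theorem it equals $z_0$ on the connected set $\Omega_0$. Since this holds for every subsequential limit, $\varphi^n\to z_0$ locally uniformly on $\Omega_0$, and in particular $\varphi^n(z)\to z_0$ for every $z\in\mathcal{F}\cap\mathbb{T}$.

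Granting the claim, the hard inclusion is immediate: if $z\in R$ but $z\notin\mathcal{J}$, then $z\in\mathcal{F}\cap\mathbb{T}$, so $\varphi^n(z)\to z_0$, and any recurrence $\varphi^{n_k}(z)\to z$ forces $z=z_0$. Hence $R\subseteq\mathcal{J}\cup\{z_0\}$, and combined with the previous paragraph, $\overline{R}=\mathcal{J}\cup\{z_0\}$. I expect the main obstacle to be the claim itself, namely upgrading the almost-everywhere convergence of Theorem~\ref{t:cpd2} to convergence at \emph{every} point of the Fatou set, which is exactly what rules out exceptional recurrent points hiding in the complementary arcs of the Cantor set. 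The normal-family argument, together with the observation that these arcs are glued to $\mathbb{D}$ through $\mathcal{F}$, is what turns the measure-theoretic statement into the required topological one; one should also note that $z_0$ is genuinely attracting (hence in $\mathcal{F}$, outside $\mathcal{J}$) in the hyperbolic case, whereas the parabolic point $z_0$ already lies in $\mathcal{J}$, so that the union $\mathcal{J}\cup\{z_0\}$ is read correctly in each regime.
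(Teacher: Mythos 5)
Your proof is correct, but the hard inclusion $R\subseteq\mathcal{J}\cup\{z_0\}$ is obtained by a genuinely different mechanism than the paper's. The paper defines $A=\{z\in\mathbb{T}:\varphi^n(z)\to z_0\}$, notes that $A$ is open, invokes the almost-everywhere convergence of Theorem~\ref{t:cpd2} to conclude that $A$ has full measure and is therefore dense in $\mathbb{T}$, and then identifies $\mathbb{T}\setminus A=\partial A$ with $\mathcal{J}$ using the fact that the Julia set is the boundary of the basin of attraction; recurrence in $A$ then forces $z=z_0$. You instead bypass Theorem~\ref{t:cpd2} entirely: the Denjoy--Wolff theorem gives $\varphi^n\to z_0$ locally uniformly on $\mathbb{D}$, every point of $\mathcal{F}\cap\mathbb{T}$ lies in the Fatou component $\Omega_0\supseteq\mathbb{D}$ (a spherical neighborhood in $\mathcal{F}$ of a point of $\mathbb{T}$ must meet $\mathbb{D}$), and normality plus the identity theorem for subsequential limits upgrade this to $\varphi^n\to z_0$ on all of $\Omega_0$, hence at every point of $\mathbb{T}\setminus\mathcal{J}$. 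This is a ``soft'' complex-analytic argument that delivers a strictly stronger conclusion (pointwise convergence everywhere off $\mathcal{J}$, not just almost everywhere) and is more self-contained relative to the cited literature, at the cost of leaning on standard Fatou--Julia theory (normality of the iterates on Fatou components); the paper's route is shorter given that Theorem~\ref{t:cpd2} has already been imported. The easy inclusion (periodic points are recurrent and dense in $\mathcal{J}$, and $z_0$ is fixed) is the same in both arguments, and your closing remark correctly sorts out that $z_0\notin\mathcal{J}$ in the hyperbolic case while $z_0\in\mathcal{J}$ in the parabolic case, a point the paper passes over silently.
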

\begin{proof}
Let $\mathcal{J}$ be the Julia set and $\mathcal{F} = \overline{\mathbb{C}}\setminus\mathcal{J}$ the Fatou set of $\varphi$. Suppose $z_0$ is the Denjoy-Wolff point of $\varphi$. Then $z_0$ is either an attracting or rationally neutral fixed point. Let $A = \{z\in\mathbb{T} : \varphi^n(z)\rightarrow z_0\}$.
Since the basin of attraction of $z_0$ is open, $A$ is open in $\mathbb{T}$.
Note that $A\subset\mathcal{F}$ and $\mathcal{J}\subset\mathbb{T}\setminus A$.
By Theorem \ref{t:cpd2}, the Lebesgue measure of $\mathbb{T}\setminus A$ is zero.
Thus, any open set $O$ in $\mathbb{T}$ must contain a point $z\in A$.
Then, $A$ is dense in $\mathbb{T}$.
Therefore, $\mathbb{T} = \overline{A} = A\cup\partial A$.
Note that $\mathbb{D}$ and $\overline{\mathbb{C}}\setminus\overline{\mathbb{D}}$ are the subsets of $\mathcal{F}$. Since the Julia set is the boundary of a union of components of the Fatou set, $\mathcal{J} = \partial A = \mathbb{T}\setminus A$.
Next, let $R$ be the set of recurrent points.
Notice that $z_0$ is in $R$ and $\mathcal{J}\subset\overline{R}$.
Let $z\in R$. If $z\in A$, then $z = z_0$. Thus, $R\subset\mathcal{J}\cup\{z_0\}$.
Since $\mathcal{J}$ is closed, $\overline{R}\subset\mathcal{J}\cup\{z_0\}$.
Hence, $\overline{R} = \mathcal{J}\cup\{z_0\}$.
\end{proof}

Recall that $\varphi$ is said to be topologically transitive if there is some $z_0\in\mathbb{T}$ such that $O_{\varphi}(z_0) := \{\varphi^n(z_0)~|~n\in\mathbb{N}\}$ is dense in $\mathbb{T}$. Clearly, the recurrent points are dense in $\mathbb{T}$ if $\varphi$ is topologically transitive. However, the converse is not necessarily true. Consequently, this condition is stronger than the recurrent points  of $\varphi$ being dense. For this reason, we include the result in this paper though it is not needed to prove our main theorem.

\begin{theorem} \label{t:toptransitive}
Let $\varphi:\mathbb{T}\rightarrow\mathbb{T}$ be a finite Blaschke product. If $\varphi$ is elliptic or parabolic with zero hyperbolic step, then $\varphi$ is topologically transitive.
\begin{proof}
Let $V_n$ be a countable base of open arcs for $\mathbb{T}$. Let $U$ be any open subarc of $\mathbb{T}$.
By Lemma~\ref{l:zerostep}, $V_1\subset\varphi^{n_1}(U)$ for some $n_1\in\mathbb{N}$. We can find a subarc $U_1$ of $U$ such that $V_1 = \varphi^{n_1}(U_1)$. Let $W_1\subset\overline{W_1}\subset U_1$ be such that $\varphi^{n_1}|_{W_1}$ is one-to-one. Then $\varphi^{n_1}(\overline{W_1})\subset V_1$.
Repeat with $V_2$ in place of $V_1$ and $W_1$ in place of $U$. Then, $V_2\subset\varphi^{n_2}(W_1)$ for some $n_2\in\mathbb{N}$. Therefore, there is a subarc $U_2$ of $W_1$ such that $V_2 = \varphi^{n_2}(U_2)$. Let $W_2\subset\overline{W_2}\subset U_2$ be such that $\varphi^{n_2}|_{W_2}$ is one-to-one. Then $\varphi^{n_2}(\overline{W_2})\subset V_2$. 
Keep repeating this process, we have $\{W_n\}$ such that $W_{n+1}\subset W_n$. Thus, $\displaystyle\bigcap^{\infty}_{n=1}\overline{W_n}\neq\emptyset$. Moreover, $\varphi^{n_k}(\overline{W_k})\subset V_k$ for each $k\in\mathbb{N}$. That is, let $z_0\in\ds\bigcap^{\infty}_{n=1}\ol{W_n}$. Then $\varphi^{n_k}(z_0)\in V_k$ for each $k\in\mathbb{N}$. Therefore, $O_{\varphi}(z_0) = \{\varphi^n(z_0)~|~n\in\mathbb{N}\}$ is dense in $\mathbb{T}$.
Hence, $\vpi$ is topological transitive.
\end{proof}
\end{theorem}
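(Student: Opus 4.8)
The goal is to exhibit a single point $z_0\in\bbT$ whose forward orbit $\{\vpi^n(z_0):n\in\bbN\}$ meets every member of a fixed countable base of open arcs; this is precisely topological transitivity. The whole argument rests on part~(2) of Lemma~\ref{l:zerostep}: for every nonempty open $\sO\subset\bbT$ there is an $N$ with $\vpi^N(\sO)=\bbT$. In particular, for any two nonempty open arcs $U,V$ there is an $n$ with $V\subset\vpi^n(U)$, so some point of $U$ is carried into $V$ by $\vpi^n$. This ``hitting'' property is the engine; everything else is soft topology.

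I would carry this out by a nested-arc construction. Fix a countable base $\{V_n\}$ of open arcs and start with an arbitrary nonempty open arc $U$. Using Lemma~\ref{l:zerostep}(2) choose $n_1$ with $\vpi^{n_1}(U)=\bbT\supset V_1$; pulling back, there is a subarc $U_1\subset U$ with $\vpi^{n_1}(U_1)=V_1$. Because $\vpi$ is a non-trivial finite Blaschke product it is a local homeomorphism, and hence so is each iterate $\vpi^{n_1}$; I can therefore shrink to an open arc $W_1$ with $\ol{W_1}\subset U_1$ on which $\vpi^{n_1}$ is injective and $\vpi^{n_1}(\ol{W_1})\subset V_1$. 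Repeating with $V_2$ and $W_1$ in place of $V_1$ and $U$, and continuing inductively, produces a decreasing sequence of closed arcs $\ol{W_1}\supset\ol{W_2}\supset\cdots$ together with integers $n_k$ such that $\vpi^{n_k}(\ol{W_k})\subset V_k$. By compactness $\ds\bigcap_k\ol{W_k}\neq\emptyset$; any $z_0$ in this intersection satisfies $\vpi^{n_k}(z_0)\in V_k$ for every $k$, so its orbit meets every basic open set and is dense.

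The step that needs the most care is the construction of the $W_k$: I must guarantee both that $\ol{W_k}$ sits inside the previously chosen arc and that $\vpi^{n_k}$ carries it into $V_k$. This is where the local-homeomorphism property of finite Blaschke products is essential—it lets me pass from the surjectivity statement $\vpi^{n_k}(W_{k-1})\supset V_k$ to a genuine subarc on which $\vpi^{n_k}$ is one-to-one, so that shrinking to a closed subarc keeps the image inside $V_k$. The only other point to confirm is that the nested intersection is nonempty, which is immediate from compactness of $\bbT$.

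Alternatively, I could phrase the same idea through the Birkhoff transitivity theorem: $\bbT$ is a compact metric, hence Baire, space with a countable base, and for each $k$ the set $\bigcup_n\vpi^{-n}(V_k)$ is open and---by the hitting property above---dense, so $\ds\bigcap_k\bigcup_n\vpi^{-n}(V_k)$ is a dense $G_\delta$ consisting entirely of points with dense orbit. I would most likely present the explicit nested construction, since it makes the role of Lemma~\ref{l:zerostep}(2) fully transparent and avoids invoking the abstract theorem.
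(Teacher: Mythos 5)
Your proposal is correct and follows essentially the same route as the paper: the same nested-arc construction driven by Lemma~\ref{l:zerostep}(2), using the local-homeomorphism property to shrink to closed subarcs $\ol{W_k}$ with $\vpi^{n_k}(\ol{W_k})\subset V_k$, and compactness to extract a point with dense orbit. The Baire-category alternative you mention is a valid variant, but since you elect to present the explicit construction, your argument matches the paper's proof in substance and detail.
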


\section{Quasinilpotent elements and the radical}

\begin{lemma} \label{l:quasinilpotent1}
\begin{enumerate}
\item Let $F\in A(\mathbb{D})\times_\alpha\mathbb{Z}^+$. If  the $0^{th}$ Fourier coefficient is not identically zero or
 $F$ has a Fourier coefficient which does not vanish on the fixed point set of a finite Blaschke product $\varphi$, then $F$ is not quasinilpotent. 
\item Let $f\in\mathcal{A}(\mathbb{D})$. If $f$ does not vanish on the set of recurrent points of $\varphi$, then there is an $n\in\mathbb{N}$ such that $U^{n} f$ is not quasinilpotent.
\end{enumerate}
\end{lemma}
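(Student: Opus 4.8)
The plan is to prove that $F$ is not quasinilpotent by producing a contractive representation of the semicrossed product in which the image of $F$ has positive spectral radius. This suffices because $F$ is quasinilpotent exactly when $\lim_m\|F^m\|^{1/m}=0$, and any contractive homomorphism $\pi$ satisfies $\|\pi(F)^m\|=\|\pi(F^m)\|\le\|F^m\|$, whence $\lim_m\|\pi(F)^m\|^{1/m}\le\lim_m\|F^m\|^{1/m}$; so a positive spectral radius for $\pi(F)$ forces one for $F$. For part (1) the representations will be one-dimensional (characters), and for part (2) the defining representations together with the norm identity $\|U^kg\|=\|g\|$.

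For part (1), fix $z_0\in\ol{\bbD}$ and $\la\in\bbD$ and consider the scalar contractions $(A,T)=(z_0,\la)$. The covariance relation $AT=T\vpi(A)$ reads $z_0\la=\la\vpi(z_0)$, which holds precisely when $\la=0$ (for any $z_0$) or $z_0$ is a fixed point of $\vpi$. In either case the resulting one-dimensional representation is the character
\[
\chi_{z_0,\la}(F)=\sum_{k\ge0}\la^k\,\pi_k(F)(z_0),
\]
which is well defined and contractive on all of $\scD$ since it agrees on $\sP$ with the contractive representation $\rho\times T$ and $\|\pi_k(F)\|\le\|F\|$. If $\pi_0(F)=f_0\not\equiv0$, choose $z_0$ with $f_0(z_0)\ne0$ and take $\la=0$, so $\chi_{z_0,0}(F)=f_0(z_0)\ne0$. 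If instead some $\pi_k(F)=f_k$ is nonzero at a fixed point $z_0$ of $\vpi$, regard $\la\mapsto\chi_{z_0,\la}(F)=\sum_k\la^kf_k(z_0)$ as a power series on $\bbD$ (its coefficients are bounded by $\|F\|$); its $k$-th coefficient $f_k(z_0)$ is nonzero, so it is not identically zero, hence $\chi_{z_0,\la}(F)\ne0$ for some $\la\in\bbD$. In both cases a character sends $F$ to a nonzero scalar, so $F$ is not quasinilpotent.

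For part (2), the key identity, obtained from $fU^n=U^n\al^n(f)$, is $(U^nf)^m=U^{nm}\prod_{j=0}^{m-1}\al^{jn}(f)$. Using $\|U^{nm}g\|=\|g\|$ this yields $\|(U^nf)^m\|=\bigl\|\prod_{j=0}^{m-1}(f\circ\vpi^{jn})\bigr\|_{\sA(\bbD)}\ge\bigl|\prod_{j=0}^{m-1}f(\vpi^{jn}(w))\bigr|$ for any $w\in\ol{\bbD}$. The idea is to take $w$ periodic: if $w$ has period $p$ and we set $n=p$, then $\vpi^{jn}(w)=w$ for all $j$, so the right-hand side is $|f(w)|^m$ and $\lim_m\|(U^pf)^m\|^{1/m}\ge|f(w)|$; thus $U^pf$ is not quasinilpotent provided $f(w)\ne0$. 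It then remains to find a periodic point where $f$ does not vanish. By hypothesis $f(z)\ne0$ at some recurrent point $z$; by Theorem~\ref{t:recpoints} (positive hyperbolic step) or Lemma~\ref{l:zerostep} (zero step) such a $z$ lies in $\mathcal{J}\cup\{w_0\}$, with $w_0$ the Denjoy--Wolff point. If $z=w_0$ it is already a fixed point and we take $w=z$; otherwise $z\in\mathcal{J}$, and since the repelling periodic points are dense in $\mathcal{J}$ while $\{f\ne0\}$ is open and contains $z$, there is a periodic point $w\in\mathcal{J}$ with $f(w)\ne0$.

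The main obstacle is this last step of part (2): passing from a recurrent point where $f\ne0$ to a genuinely periodic one, since recurrence by itself does not keep a single $\vpi^n$-orbit inside a region $\{|f|>\delta\}$ and so does not control the geometric mean $\bigl(\prod_{j}|f(\vpi^{jn}(z))|\bigr)^{1/m}$; this is exactly what density of periodic points in the Julia set supplies. A secondary technical point, needed in both parts, is that the characters $\chi_{z_0,\la}$ and the norm estimate extend from the dense algebra $\sP$ to all of $\scD$, which follows from continuity together with the contractivity of the defining representations and the inequality $\|\pi_k(F)\|\le\|F\|$.
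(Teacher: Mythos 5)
Your proposal is correct and, for part (2), follows essentially the same route as the paper: locate a periodic point $x_0$ in the Julia set (or the Denjoy--Wolff point) where $f\neq 0$, using density of repelling periodic points, and bound $\|(U^nf)^m\|$ below by $|f(x_0)|^m$ --- the paper extracts this lower bound via the Fourier projection $\pi_{nm}$ while you use the isometry $\|U^{nm}g\|=\|g\|$, which is an equivalent mechanism. For part (1) the paper omits the argument as a ``straightforward computation,'' and your character construction $\chi_{z_0,\lambda}$ (with $\lambda=0$ for the zeroth coefficient, and a nonvanishing power series in $\lambda$ at a fixed point $z_0$ otherwise) is a correct way to supply it.
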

\begin{proof}
The first part is a straightforward computation, which we omit.

For the second part, note that $f$ vanishes on the set of recurrent points if and only if $f$ vanishes on its closure. 
Assume that $\varphi$ is either hyperbolic or parabolic with positive hyperbolic step. From Theorem~\ref{t:recpoints}, the closure of the set of recurrent points is the union of the Julia set and the Denjoy-Wolff point.
By the first part, if $f$ does not vanish at the Denjoy-Wolff point, then $U^n f$ is not quasinilpotent for any $n$. 
Assume that $f$ vanishes on the Denjoy-Wolff point and that $f$ does not vanish on the Julia set.
Since repelling periodic points are dense in the Julia set, $f(x_0)\neq 0$ for some $x_0$ such that $\varphi^n(x_0) = x_0$ for some $n$.
Let $F = U^nf$. For any $k > 0$,
\begin{align*}
||F^k|| &\geq ||\pi_{nk}(F^k)||\\
&\geq |\pi_{nk}(F^k)(x_0)|\\
&= |f(x_0) f\circ\varphi^n(x_0)\dots f\circ\varphi^{(k-1)n}(x_0)|\\
&= |f(x_0)|^k.
\end{align*}
Therefore, $\displaystyle\lim_{k\rightarrow\infty}||F^k||^{\frac{1}{k}} \geq |f(x_0)| > 0$.
Similarly, if $\varphi$ is either elliptic or parabolic with zero hyperbolic step, the closure of the set of recurrent points is exactly the Julia set and the proof is the same as above.
Hence, $F$ is not quasinilpotent.
\end{proof}

\begin{theorem} \label{t:jacobsonradical}
The Jacobson radical of  $A(\mathbb{D})\times_\alpha\mathbb{Z}^+$ is the set $\{F\in A(\mathbb{D})\times_\alpha\mathbb{Z}^+ : \pi_0(F) = 0$, and $\pi_k(F)$ vanishes on the set of recurrent points of $\varphi,~\text{for}~k > 0\}$.
\end{theorem}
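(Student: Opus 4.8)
The theorem characterizes the Jacobson radical $\rad(\scD)$ as the set
\[
J = \{F \in \scD : \pi_0(F)=0 \text{ and } \pi_k(F) \text{ vanishes on the recurrent points for } k>0\}.
\]
My plan is to prove the two inclusions $\rad(\scD) \subseteq J$ and $J \subseteq \rad(\scD)$ separately, leaning on the fact (Theorem~1) that $\scD$ sits completely isometrically inside $\scC$, so that I can transport the known description of the radical for semicrossed products of $C(X)$ (the result of Donsig--Katsoulis--Manoussos cited in the introduction) to this setting.

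For the inclusion $\rad(\scD) \subseteq J$, I would argue via quasinilpotence. Every element of the Jacobson radical is quasinilpotent, so it suffices to show that any $F \notin J$ fails to be quasinilpotent. If $F \notin J$, then either $\pi_0(F) \neq 0$, or some $\pi_k(F)$ with $k>0$ does not vanish on the set of recurrent points. In either case, Lemma~\ref{l:quasinilpotent1} applies almost directly: part~(1) handles the case $\pi_0(F) \neq 0$ (and more generally a nonvanishing coefficient on the fixed-point set), while part~(2), applied to $f = \pi_k(F)$, produces some power for which the Fourier-coefficient lower bound $\|F^j\| \geq |f(x_0)|^{j/n}$ forces the spectral radius to be positive. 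The one gap to fill carefully is that Lemma~\ref{l:quasinilpotent1}(2) is stated for a monomial $U^n f$, whereas a general $F \notin J$ need not be a monomial; I would need the Fourier-coefficient estimate $\|\pi_{k}(G)\| \le \|G\|$ together with an evaluation at a periodic point $x_0$ to extract the same growth estimate $\|\pi_{nk}(F^k)(x_0)\| = \prod_j f(\vpi^{jn}(x_0))$ directly from $F$, which is the step where one must check that the off-diagonal terms do not interfere at a periodic point.

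For the reverse inclusion $J \subseteq \rad(\scD)$, the strategy is to realize $J$ as the restriction to $\scD$ of the radical of $\scC$. Because $\scD \hookrightarrow \scC$ completely isometrically and both algebras share the same Fourier-coefficient machinery via the common gauge action $\ga_t$, an element $F \in \scD$ has $\pi_0(F)=0$ and each $\pi_k(F)$ ($k>0$) vanishing on the recurrent points precisely when $F$, viewed in $\scC$, lies in $\rad(\scC)$ as described by Donsig--Katsoulis--Manoussos (their characterization is exactly: zeroth coefficient zero and higher coefficients vanishing on the recurrent set). Since $\rad(\scC) \cap \scD \subseteq \rad(\scD)$ whenever $\scD$ is a closed subalgebra — because a quasinilpotent ideal of the larger algebra intersected with the subalgebra is a quasinilpotent (hence radical) ideal of the subalgebra — this gives $J \subseteq \rad(\scD)$. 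I would verify the ideal property: that $J$ is a two-sided ideal of $\scD$ (closed under multiplication by $U$ and by disk-algebra elements, using the covariance relation $fU = U\al(f)$ and that $\al$ preserves the recurrent set since $\vpi$ maps recurrent points to recurrent points), and that every element of $J$ is quasinilpotent.

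The main obstacle I anticipate is the quasinilpotence of \emph{every} element of $J$, not merely of monomials. Lemma~\ref{l:quasinilpotent1} runs in the contrapositive direction (nonvanishing implies not quasinilpotent), so to close the argument I must show the positive statement: if $\pi_0(F)=0$ and all higher coefficients vanish on the recurrent points, then $\lim_k \|F^k\|^{1/k}=0$. This requires a genuine norm estimate on powers $F^k$ — expanding $F^k$ in its Fourier series and bounding each coefficient $\pi_m(F^k)$, which is a sum of products $g_{i_1}\, \al^{?}(g_{i_2}) \cdots$ of the coefficient functions composed with iterates of $\vpi$. The key analytic input is that, away from the recurrent set, the orbits $\vpi^n(z)$ spend only boundedly much time in any fixed neighborhood where the coefficients are bounded below, so the products decay; this is where Lemma~\ref{l:zerostep}(2) and Theorem~\ref{t:recpoints} (controlling where orbits accumulate) do the real work. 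Making this decay uniform enough to beat the number of terms in $F^k$ is the crux, and I would expect to invoke the same orbit-dynamics estimates that underlie the Donsig--Katsoulis--Manoussos theorem rather than reprove them from scratch.
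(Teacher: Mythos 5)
Your inclusion $J\subseteq\rad(\scD)$ is essentially the paper's argument: set $I=\rad(\scC)\cap\scD$, observe that $I$ is an ideal of $\scD$ consisting of quasinilpotent elements (quasinilpotence is inherited from $\scC$ through the isometric embedding), hence $I\subseteq\rad(\scD)$, and identify $I$ with $J$ via the Donsig--Katavolos--Manoussos description of $\rad(\scC)$. The ``main obstacle'' you flag at the end --- proving directly that every element of $J$ is quasinilpotent by estimating $\|F^k\|$ --- is not an obstacle on this route: quasinilpotence comes for free from membership in $\rad(\scC)$, so no orbit-dynamics estimates need to be redone.

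The genuine gap is in the other inclusion. You reduce $\rad(\scD)\subseteq J$ to the claim that every $F\notin J$ fails to be quasinilpotent, and that claim is false: Theorem~\ref{t:positivehstep} of the paper exhibits (in the hyperbolic and parabolic positive-hyperbolic-step cases) an element $Uf$ with $f$ not vanishing on the recurrent set --- so $Uf\notin J$ --- which is nevertheless quasinilpotent, because $(Uf)^2=U^2f\,(f\circ\vpi)$ and $f\,(f\circ\vpi)$ vanishes on the recurrent set. The failure is visible in your proposed estimate: at a periodic point $x_0$ of period $n$ with $f_k(x_0)\neq 0$ one gets $\pi_{kl}(F^l)(x_0)=f_k(x_0)\,f_k(\vpi^{k}(x_0))\cdots f_k(\vpi^{(l-1)k}(x_0))$, and unless $k$ is a multiple of $n$ these factors run over the whole orbit of $x_0$, where $f_k$ may well vanish; so no positive lower bound on the spectral radius of $F$ itself results. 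The paper repairs exactly this by using that the radical is an ideal: if $F\in\rad(\scD)$ then so is $U^jF$, with $j$ chosen so that $j+k$ is a multiple of $n$, and one shows that $U^jF$ (not $F$) is not quasinilpotent --- taking $k$ minimal so that the coefficients $f_i$, $i<k$, vanish along the (recurrent) orbit of $x_0$ and do not contaminate $\pi_{(j+k)l}\bigl((U^jF)^l\bigr)(x_0)$. Your own Lemma~\ref{l:quasinilpotent1}(2) already signals this: its conclusion is that $U^nf$ is not quasinilpotent for \emph{some} $n$, not for a prescribed power, and that freedom in choosing the power of $U$ cannot be dispensed with.
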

\begin{proof}
 Let $I =$ Rad$(C(\mathbb{T})\times_\alpha\mathbb{Z}^+)\bigcap A(\mathbb{D})\times_\alpha\mathbb{Z}^+$. It is clear that $I$ is a left ideal in $\scD$ whose elements are quasinilpotent,
since they are quasinilpotent in $\scC.$ Thus, $I \subset \text{Rad}(\scD).$ By the characterization of the radical of the semicrossed product $\scC$ of \cite{DKM}, if $F \in I$ then $\pi_0(F) = 0$ 
and $\pi_k(F)$ vanishes on the recurrent set of $\vpi$ for $k > 0.$

Conversely, let $F\in$ Rad$(A(\mathbb{D})\times_\alpha\mathbb{Z}^+)$. Let $\pi_i(F) = f_i$ be the $i^{\text{th}}$ Fourier coefficients of $F$. From Lemma~\ref{l:quasinilpotent1},
 $f_0$ must be zero and $f_i$ vanishes on the Denjoy-Wolff point $z_0$ if $z_0\in\mathbb{T}$,  for all $i > 0$. Suppose that not all $f_i$'s vanish on the Julia set. Then there exists $k > 0$ such that $f_{k}(x_0)\neq 0$ for some $x_0$ where $\varphi^{n}(x_0) = x_0$ for some $n$, and $f_i$ vanishes on the set of recurrent points for all $i < k$.
Choose $j$ such that $U^j(U^{k}{f_{k}}) = U^{m}{f_{k}}$ where $m$ is a multiple of $n$. For $l > 0$,
\begin{align*}
||(U^jF)^l|| &\geq ||\pi_{ml}((U^jF)^l)||\\
&\geq |\pi_{ml}((U^jF)^l)(x_0)|\\
&= |f_k(x_0) f_k\circ\varphi^m(x_0)\dots f_k\circ\varphi^{(l-1)m}(x_0)|\\
&= |f_k(x_0)|^l.
\end{align*}
Therefore, $\displaystyle\lim_{l\rightarrow\infty}||(U^jF)^l||^{\frac{1}{l}}\geq |f_k(x_0)| > 0$.
Thus, $U^jF$ is not quasinilpotent. This is a contradiction since $F$ is in the radical.
Hence, $f_i$ vanishes on the set of recurrent points for all $i > 0$.
\end{proof}
 
\begin{theorem}
The Jacobson radical of  $A(\mathbb{D})\times_\alpha\mathbb{Z}^+$ is 
\begin{enumerate}
	\item nonzero if $\varphi$ is hyperbolic or parabolic with positive hyperbolic step.
	\item zero if $\varphi$ is elliptic or parabolic with zero hyperbolic step.
\end{enumerate}
\end{theorem}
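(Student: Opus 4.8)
The plan is to read off both statements directly from the description of the radical in Theorem~\ref{t:jacobsonradical}, which says that $F \in \scD$ lies in the radical precisely when $\pi_0(F) = 0$ and every $\pi_k(F)$ with $k > 0$ vanishes on the set of recurrent points of $\vpi$. Since each Fourier coefficient $\pi_k(F)$ is a function in $\sA(\bbD)$, hence continuous on $\ol{\bbD}$, the condition that it vanish on the recurrent set is equivalent to its vanishing on the closure of that set. Thus everything hinges on how large this closure is inside $\bbT$, and the two cases of the theorem correspond exactly to the two possibilities for that closure established earlier.

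For part (2), suppose $\vpi$ is elliptic or parabolic with zero hyperbolic step. By Lemma~\ref{l:zerostep}(1) the recurrent points are dense in $\bbT$. If $F$ lies in the radical, then for each $k > 0$ the function $\pi_k(F) \in \sA(\bbD)$ vanishes on a dense subset of $\bbT$, hence by continuity vanishes on all of $\bbT$, and therefore $\pi_k(F) \equiv 0$ by the maximum modulus principle. Together with $\pi_0(F) = 0$ this forces every Fourier coefficient of $F$ to vanish, so $F = 0$ and the radical is $(0)$.

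For part (1), suppose $\vpi$ is hyperbolic or parabolic with positive hyperbolic step. By Theorem~\ref{t:recpoints} the closure of the recurrent set is $\sJ \cup \{z_0\}$, where $\sJ$ is the Julia set and $z_0 \in \bbT$ is the Denjoy-Wolff point. Because $\vpi$ is of positive hyperbolic step, Theorem~\ref{t:juliasetT} gives $\sJ \neq \bbT$, so $\sJ$ is a Cantor subset of $\bbT$, and by Remark~\ref{r:measjuliaset} it has Lebesgue measure zero; adjoining the single point $z_0$ leaves a closed set $E := \sJ \cup \{z_0\}$ of measure zero. I would then invoke the classical fact that every closed subset of $\bbT$ of Lebesgue measure zero is a zero set (indeed a peak-interpolation set) for the disk algebra to produce a nonzero $f \in \sA(\bbD)$ vanishing on $E$. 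Setting $F = Uf$, we have $\pi_0(F) = 0$ and $\pi_1(F) = f$ vanishing on the recurrent set, while all other coefficients vanish, so $F$ lies in the radical by Theorem~\ref{t:jacobsonradical}; moreover $F \neq 0$, since $\|Uf\| = \|f\| \neq 0$. Hence the radical is nonzero.

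The only non-routine ingredient is the existence step in part (1): producing a nonzero disk-algebra function vanishing on the measure-zero set $E$. This is exactly where the Rudin--Carleson/Fatou theorem on zero sets of $\sA(\bbD)$ enters, and it is the point I would state and attribute carefully; I expect this to be the main (and essentially the only) obstacle. By contrast, the reduction of the measure-zero claim to Theorem~\ref{t:juliasetT} and Remark~\ref{r:measjuliaset}, together with the continuity and maximum-modulus argument used in part (2), are immediate once the radical is characterized.
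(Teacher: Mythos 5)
Your proposal is correct and follows essentially the same route as the paper: part (2) is read off from Theorem~\ref{t:jacobsonradical} together with density of the recurrent points, and part (1) uses the measure-zero closed set $\sJ\cup\{z_0\}$ and the classical Fatou/Rudin theorem (the paper cites Hoffman, Theorem p.~80) to produce a nonzero $f\in\sA(\bbD)$ vanishing there, so that $Uf$ lies in the radical. You merely spell out the continuity/maximum-modulus step in (2) that the paper leaves implicit.
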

\begin{proof}
Note that (2) follows immediately from Theorem~\ref{t:jacobsonradical}.
Let $\varphi$ be any hyperbolic or parabolic with positive hyperbolic step. Then the set of recurrent points of $\varphi$ is not dense in $\mathbb{T}$. Furthermore, this set has Lebesgue measure zero. One can find a function $f\in A(\mathbb{D})$ such that $f$ vanishes exactly on the closure of the set of recurrent points (see \cite{Hoffman} Theorem p.80). Therefore, $Uf$ is in the Jacobson radical of $A(\mathbb{D})\times_\alpha\mathbb{Z}^+$.
\end{proof}

What is more, we not only obtain the same description of the Jacobson radical from \cite{DKM}, but also the following analogue of their theorem.

\begin{corollary}
The semicrossed product $A(\mathbb{D})\times_\alpha\mathbb{Z}^+$ is semi-simple if and only if the recurrent points are dense in $\mathbb{T}$.
\end{corollary}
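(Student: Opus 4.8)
The plan is to reduce everything to the computation of the radical already established in Theorem~\ref{t:jacobsonradical}. Recall that a Banach algebra is semi-simple exactly when its Jacobson radical is $(0)$, so the assertion is equivalent to showing that the set
\[
\operatorname{Rad}(A(\mathbb{D})\times_\alpha\mathbb{Z}^+) = \{F : \pi_0(F) = 0,\ \pi_k(F)\ \text{vanishes on the recurrent set for}\ k>0\}
\]
collapses to $\{0\}$ if and only if the recurrent points of $\varphi$ are dense in $\mathbb{T}$.

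First I would handle the \emph{if} direction. Assume the recurrent points are dense in $\mathbb{T}$ and take $F$ in the radical. By Theorem~\ref{t:jacobsonradical} we have $\pi_0(F)=0$, and for each $k>0$ the Fourier coefficient $\pi_k(F)\in A(\mathbb{D})$ vanishes on the recurrent set. Since $\pi_k(F)$ is continuous on $\overline{\mathbb{D}}$ and the recurrent points are dense in $\mathbb{T}$, its boundary function vanishes identically on $\mathbb{T}$; by the maximum modulus principle a disk algebra function is determined by its boundary values, so $\pi_k(F)=0$ for every $k\geq 0$. As an element of the semicrossed product is determined by its Fourier coefficients, $F=0$, and hence the radical is trivial.

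For the converse I would argue contrapositively. If the recurrent points are not dense, then by Lemma~\ref{l:zerostep} together with Theorem~\ref{t:recpoints} the map $\varphi$ must be hyperbolic or parabolic with positive hyperbolic step, and the closure $\overline{R}$ of the recurrent set is a proper closed subset of $\mathbb{T}$ of Lebesgue measure zero by Remark~\ref{r:measjuliaset}. Exactly as in the preceding theorem, the interpolation result cited from \cite{Hoffman} yields a nonzero $f\in A(\mathbb{D})$ vanishing precisely on $\overline{R}$; then $Uf$ satisfies $\pi_0(Uf)=0$ and $\pi_1(Uf)=f$ vanishing on the recurrent set, so $Uf$ lies in the radical by Theorem~\ref{t:jacobsonradical}. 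Since $\|Uf\|=\|f\|\neq 0$ by the norm identity recorded earlier, the radical is nonzero, and the algebra fails to be semi-simple.

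I expect the main obstacle to be purely conceptual rather than computational, and it lives in the \emph{if} direction: the passage from ``$\pi_k(F)$ vanishes on a dense subset of $\mathbb{T}$'' to ``$\pi_k(F)\equiv 0$'' rests on continuity combined with the rigidity of disk algebra functions. The two facts I would be careful to record explicitly are that a function in $A(\mathbb{D})$ vanishing on all of $\mathbb{T}$ is necessarily zero, and that an element of the semicrossed product with all Fourier coefficients zero is zero. The reverse implication is essentially a repackaging of the construction in the previous theorem, so the only point needing verification there is that $\overline{R}$ is closed and of measure zero, which makes the interpolation result applicable.
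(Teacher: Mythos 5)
Your proposal is correct and follows essentially the same route as the paper: the paper leaves the corollary as an immediate consequence of Theorem~\ref{t:jacobsonradical} together with the preceding theorem, whose proof of the nonzero case is exactly your interpolation argument (a nonzero $f\in A(\mathbb{D})$ vanishing precisely on the measure-zero closure of the recurrent set, so that $Uf$ lies in the radical), and whose zero case is your observation that a disk algebra function vanishing on a dense subset of $\mathbb{T}$ vanishes identically. The only substance you add is to make explicit the two routine facts the paper takes for granted, namely the rigidity of $A(\mathbb{D})$ functions under boundary vanishing and the faithfulness of the Fourier coefficient maps.
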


\cite{HPW} studied semicrossed products of the disk algebra with respect to conformal automorphisms (i.e., M\"{o}bius transformations). There it was shown that it was always the case that the
set of quasinilpotent elements in the semicrossed product coincides with the Jacobson radical.  That contrasts with the situation for semicrossed products $C(X)\times_{\al}\bbZ^+$ where $X$ is an
infinite compact metric space and the endomorphism $\al(f) = f\circ \vpi$ where $\vpi$ is a nontrivial continuous surjection of $X.$  In this case one obtains nilpotent elements not in the Jacobson radical.

For semicrossed products of the disk algebra defined by finite Blaschke products, one can ask if the set of quasinilpotent elements coincides with the radical, as in the conformal case, or whether there are
quasinilpotent elements not in the radical. We begin by answering this question in those cases where the Jacobson radical is nonzero.

\begin{theorem} \label{t:positivehstep}
If $\vpi$ is either hyperbolic or parabolic with positive hyperbolic step, then there exists a nonzero quasinilpotent element that is not in the Jacobson radical.
\end{theorem}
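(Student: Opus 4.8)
\section*{Proof proposal}

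The plan is to realize the required element as a single monomial $F=Uf$ with $f\in\sA(\bbD)$, taking advantage of the exact norm formula available for monomials. Since $(Uf)^l=U^l\prod_{s=0}^{l-1}f\circ\vpi^s$ and $\|U^N g\|=\|g\|$, one has
\[
\|(Uf)^l\| = \Big\|\prod_{s=0}^{l-1} f\circ\vpi^s\Big\|_{\sA(\bbD)} = \sup_{z\in\bbT}\prod_{s=0}^{l-1}|f(\vpi^s(z))|,
\]
the last equality by the maximum modulus principle. Thus $Uf$ is quasinilpotent precisely when $\tfrac1l\log\sup_{z\in\bbT}\sum_{s=0}^{l-1}\log|f(\vpi^s(z))|\to-\infty$, while by Theorem~\ref{t:jacobsonradical} it fails to lie in the radical as soon as $f$ does not vanish on the whole recurrent set (note $\pi_0(Uf)=0$ automatically). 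So I must build an $f$ that vanishes at the Denjoy--Wolff point and stays small along every orbit, yet is nonzero at some recurrent point.

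To control the growth rate I would pass to invariant measures. The sequence $a_l:=\sup_{z\in\bbT}\sum_{s=0}^{l-1}\log|f(\vpi^s(z))|$ is subadditive, so $\tfrac1l a_l\to\beta:=\inf_l\tfrac1l a_l$. Feeding near-maximizers $z_l$ into the empirical measures $\mu_l=\tfrac1l\sum_{s=0}^{l-1}\delta_{\vpi^s(z_l)}$ and passing to a weak-$*$ limit produces a $\vpi$-invariant probability measure $\mu$ on $\bbT$; since $\log|f|$ is upper semicontinuous and bounded above, the portmanteau inequality gives $\int\log|f|\,d\mu\ge\limsup_l\int\log|f|\,d\mu_l\ge\beta$. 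Hence if $\int\log|f|\,d\mu=-\infty$ for every $\vpi$-invariant probability measure $\mu$, then $\beta=-\infty$ and $Uf$ is quasinilpotent. Every such $\mu$ is supported on the nonwandering set $\mathcal{J}\cup\{z_0\}$ (the basin points of $z_0$ being wandering), so it suffices to make $\log|f|$ integrate to $-\infty$ against each measure living there.

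The construction of $f$ then reduces to a purely dynamical lemma: there is a nonempty relatively clopen set $C\subsetneq\mathcal{J}$, with $z_0\notin C$, that supports no $\vpi$-invariant probability measure, equivalently $\bigcap_{n\ge0}\vpi^{-n}(C)=\emptyset$. Granting this, let $Z=(\mathcal{J}\setminus C)\cup\{z_0\}$, a closed set of Lebesgue measure zero (Remark~\ref{r:measjuliaset}), and choose $f\in\sA(\bbD)$ vanishing exactly on $Z$ via \cite{Hoffman}. Then $\log|f|\equiv-\infty$ on $Z$; any invariant $\mu$ either charges $Z$ or has $\mu(C)=1$, and the latter forces $\mu(\bigcap_n\vpi^{-n}(C))=1$, which is impossible, so $\int\log|f|\,d\mu=-\infty$ throughout and $Uf$ is quasinilpotent. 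On the other hand $f$ is nonzero on $C$, which, being a nonempty clopen piece of the Cantor set $\mathcal{J}$, contains periodic---hence recurrent---points; so $\pi_1(Uf)=f$ does not vanish on the recurrent set and $Uf\notin\operatorname{Rad}(\scD)$ by Theorem~\ref{t:jacobsonradical}, while $Uf\neq0$.

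The heart of the matter, and the step I expect to be hardest, is this dynamical lemma. I would obtain $C$ from positive expansiveness of $\vpi$ on its Cantor Julia set: if $C$ has diameter smaller than an expansive constant and contains none of the finitely many fixed points of $\vpi$, then any point $x$ whose entire forward orbit stays in $C$ must be fixed---apply expansiveness to $x$ and $\vpi(x)$, whose orbits both remain inside $C$---so $\bigcap_n\vpi^{-n}(C)$ is empty. In the hyperbolic case $\vpi|_{\mathcal{J}}$ is uniformly expanding, hence positively expansive, and such a $C$ is immediate. In the parabolic positive-step case expansiveness degenerates only at the parabolic point $z_0\in\mathcal{J}$, so I would instead choose $C$ inside the uniformly expanding region $\mathcal{J}\setminus B(z_0,r)$; an orbit trapped in $C$ never approaches $z_0$, and the same argument applies with the expansive constant valid off a neighborhood of $z_0$. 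The remaining points---that the nonwandering set equals $\mathcal{J}\cup\{z_0\}$ and that basin points are wandering---are routine and should be recorded as a preliminary.
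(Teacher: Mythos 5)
Your overall architecture is sound and the statement you reduce everything to is true, but you have taken a much longer road than the paper, and the one step you yourself flag as hardest is the only place where your write-up is genuinely under-justified. The paper's proof is a short trick: pick $x_0$ in $X=\sJ\cup\{z_0\}$ with $\vpi(x_0)\neq x_0$, a neighborhood $\sN$ of $x_0$ in $X$ with $\sN\cap\vpi(\sN)=\emptyset$, and (via the same Hoffman/Rudin extension you invoke) an $f\in\sA(\bbD)$ whose restriction to $X$ is supported in $\sN$ and nonzero at a recurrent point. Then $(Uf)^2=U^2\,(f\circ\vpi)f$ has its single nonzero Fourier coefficient vanishing on the recurrent set, so $(Uf)^2$ lies in the radical by Theorem~\ref{t:jacobsonradical}; hence $(Uf)^2$, and therefore $Uf$, is quasinilpotent, while $Uf\notin\operatorname{Rad}$ because $\pi_1(Uf)=f$ survives at a recurrent point. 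No invariant measures, no subadditivity, no expansiveness.

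Against this background, two remarks on your proposal. First, the subadditive/Krylov--Bogolyubov reduction (quasinilpotence of $Uf$ follows if $\int\log|f|\,d\mu=-\infty$ for every $\vpi$-invariant probability measure $\mu$, all of which are carried by $\sJ\cup\{z_0\}$ by Poincar\'e recurrence and Theorem~\ref{t:recpoints}) is correct as far as it goes, but it is machinery you do not need. Second, and more importantly, your dynamical lemma does not require expansiveness at all: since $\sJ$ is an infinite set containing non-fixed points of $\vpi$ and is a Cantor set, you may choose the clopen set $C$ around a non-fixed point so small that $C\cap\vpi(C)=\emptyset$, whereupon $\bigcap_{n\ge0}\vpi^{-n}(C)\subseteq C\cap\vpi^{-1}(C)=\emptyset$ trivially. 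As written, your appeal to positive expansiveness of $\vpi|_{\sJ}$ --- and in particular the assertion of a uniform expansive constant on $\sJ\setminus B(z_0,r)$ in the parabolic positive-step case, where expansion degenerates at the parabolic point --- is the one real soft spot: it is a nontrivial fact about parabolic rational maps that you assert rather than prove, and your argument hinges on it. Replacing it with the disjointness trick closes that gap; better still, once $C\cap\vpi(C)=\emptyset$ your $f$ satisfies $f\cdot(f\circ\vpi)=0$ on all of $\sJ\cup\{z_0\}$, and you are back to the paper's radical argument with no measure theory at all.
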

\begin{proof}
Since $\vpi$ is of positive hyperbolic step, the measure of the closure of recurrent points is zero.
By Theorem \ref{t:recpoints}, let $X = \sJ \cup \{z_0\}$ be the closure of recurrent points.
Note that we can find a point $x_0 \in X$ so that $\vpi(x_0) \neq x_0$.
Pick a neighborhood $\sN$ of $x_0$ in $X$ such that $\sN \cap \vpi(\sN) = \emptyset$.
Let $F \in C(X)$ be supported on $\sN$.
Then $F F \circ \vpi = 0$ since $F = 0$ on $\bbT \setminus \sN$ and $F \circ \vpi = 0$ on $\sN$.
By Rudin's Theorem (see \cite{Hoffman} p.81), there exists a function $f \in \sA(\bbD)$ whose restriction to $X$ is $F$. Since $\vpi(X)\subseteq X$, $f f \circ \vpi = F F \circ \vpi = 0$ on $X$.
Consider that $(Uf)^2 = U^2f f \circ \vpi$ is quasinilpotent since $f f \circ \vpi$ vanishes on the set of recurrent points. Hence, $Uf$ is a quasinilpotent element which is not in the radical.
\end{proof}

Next we look at the case where $\vpi$ is elliptic, for which we know the Jacobson radical is zero, and ask if there are nonzero quasinilpotent elements.  First a lemma.

\begin{lemma} \label{l:measurepreserving}
If $\varphi$ is elliptic with the Denjoy-Wolff point $0$, then $\varphi$ is measure-preserving.
\end{lemma}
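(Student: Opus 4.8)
If $\vpi$ is elliptic with Denjoy-Wolff point $0$, then $\vpi$ is measure-preserving; that is, $m(\vpi^{-1}(E)) = m(E)$ for every measurable $E \subseteq \bbT$.

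The plan is to exploit the special structure forced by having the fixed point at the origin. First I would observe that an elliptic finite Blaschke product $\vpi$ restricted to the boundary $\bbT$ is a smooth (indeed real-analytic) orientation-preserving $N$-to-one covering map of the circle onto itself, so for a measurable set $E$ the preimage $\vpi^{-1}(E)$ splits, off a measure-zero set, into $N$ pieces on each of which $\vpi$ is a diffeomorphism. Thus by the change-of-variables formula the natural quantity to compute is $m(\vpi^{-1}(E)) = \int_E \big(\sum_{w : \vpi(w)=\zeta} \tfrac{1}{|\vpi'(w)|}\big)\, \tfrac{|d\zeta|}{2\pi}$, where the sum runs over the $N$ preimages of $\zeta$. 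Hence the whole statement reduces to the pointwise identity
\[
\sum_{w:\,\vpi(w)=\zeta} \frac{1}{|\vpi'(w)|} = 1 \qquad \text{for a.e. } \zeta \in \bbT,
\]
the transfer-operator (Perron–Frobenius) condition for Lebesgue measure to be invariant.

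The key step is therefore to verify this weighted preimage-count identity, and here I would use the hypothesis that $0$ is the fixed point. The cleanest route is a contour-integration / residue argument: for $\zeta \in \bbT$ fixed, the function $g(z) = \tfrac{\zeta}{\zeta - \vpi(z)}$ has a simple pole at each of the $N$ solutions of $\vpi(z) = \zeta$ inside $\bbD$ (using that $\vpi$ is $N$-to-one on $\bbD$ by the argument principle, since $\vpi$ fixes $0$ and maps $\bbD$ onto $\bbD$). Computing $\tfrac{1}{2\pi i}\oint_{\bbT} g(z)\,\tfrac{dz}{z}$ two ways — by residues at the interior poles, and directly via the Fourier/boundary expansion anchored at $\vpi(0)=0$ — should produce the constant $1$ on the right. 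Concretely, because $\vpi(0) = 0$, the power series of $\vpi$ has no constant term, so $1/(\zeta - \vpi(z))$ expands cleanly about $z=0$ and the residue at $z=0$ of $g(z)/z$ is exactly $1$; summing the residues at the points $w$ with $\vpi(w)=\zeta$ yields $\sum_w \zeta/(\vpi'(w)\,w)$, and pairing this with its conjugate (using $|\zeta|=|\vpi(w)|=|w|=1$ so that $\ol{\zeta}/(\ol{\vpi'(w)}\,\ol{w})$ appears) converts the complex residue sum into the real sum $\sum_w 1/|\vpi'(w)|$. The normalization then pins the value at $1$.

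The main obstacle I anticipate is the bookkeeping that turns the complex residues $\zeta/(w\,\vpi'(w))$ into the real reciprocal-derivative weights $1/|\vpi'(w)|$; this requires the boundary relation $|\vpi'(w)| = w\,\vpi'(w)/\vpi(w) = w\,\vpi'(w)/\zeta$ valid for $w,\zeta \in \bbT$ (a standard consequence of $\vpi$ mapping $\bbT$ to $\bbT$, obtained by differentiating $\vpi(w)\ol{\vpi(w)} = 1$ along the circle), so that each residue is already real and positive and equals the desired weight. Once that identity is in hand, substituting $\zeta/(w\,\vpi'(w)) = 1/|\vpi'(w)|$ into the residue sum gives the transfer identity directly, and the change-of-variables computation above closes the argument. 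An alternative to the residue computation, which I would keep in reserve in case the contour manipulation is delicate, is to note that for the origin-fixed elliptic case one can conjugate or argue via the Poisson kernel: the invariance of $m$ under $\vpi$ is equivalent to $\int_{\bbT} h\circ\vpi\, dm = \int_{\bbT} h\, dm$ for all continuous $h$, and testing against $h(\zeta)=\zeta^n$ reduces to showing $\int_{\bbT}\vpi^n\,dm = 0$ for $n \neq 0$, which holds because $\vpi^n$ is a boundary value of the analytic function $\vpi^n \in \sA(\bbD)$ vanishing at $0$ (for $n>0$), and its conjugate for $n<0$.
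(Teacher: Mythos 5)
Your primary route, the residue computation, has a concrete error in its setup: for $\zeta \in \bbT$ the equation $\vpi(z) = \zeta$ has \emph{no} solutions in $\bbD$, since $|\vpi(z)| < 1$ there; all $N$ preimages of $\zeta$ lie on $\bbT$ itself. So the poles of $g(z) = \zeta/(\zeta - \vpi(z))$ sit on the contour of integration rather than inside it, and the residue theorem cannot be applied as you describe. The transfer identity you are after, $\sum_{\vpi(w)=\zeta} 1/|\vpi'(w)| = 1$ when $\vpi(0)=0$, is true (it is the Poisson kernel $P_{\vpi(0)}(\zeta)$ in general), but establishing it requires either a deformed contour, Aleksandrov--Clark measure theory, or the harmonic-extension argument implicit in your own fallback; as written, the main computation does not go through.

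Fortunately, the alternative you keep ``in reserve'' is a complete and correct proof that bypasses the transfer identity entirely: for $n>0$ the power $\vpi^n$ lies in $\sA(\bbD)$ and vanishes at $0$, so the mean value property gives $\int_{\bbT}\vpi^n\,dm = \vpi(0)^n = 0 = \int_{\bbT}\zeta^n\,dm(\zeta)$; conjugation handles $n<0$, the case $n=0$ is trivial, and density of trigonometric polynomials in $C(\bbT)$ together with the Riesz representation theorem yield $\vpi_*m = m$, i.e.\ $m(\vpi^{-1}(E)) = m(E)$. I would promote that paragraph to the proof and discard the residue discussion. For comparison, the paper argues differently again: it invokes L\"owner's Lemma (an inner function fixing $0$ satisfies $m(\vpi^{-1}(A)) \le m(A)$), applies it to both $A$ and its complement, and adds the two inequalities to force equality. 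Both that argument and your fallback are short; yours is more self-contained, resting only on the mean value property and Stone--Weierstrass, while the paper's trades that for a citation to Pommerenke.
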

\begin{proof}
Let $A$ be any measurable set on $\mathbb{T}$, and let $B = \varphi^{-1}(A)$. Then $\varphi(B) = A$. Since $\varphi(0) = 0$, by L$\ddot{\text{o}}$wner's Lemma (see \cite{Pommerenke} p.72), $m(A)\geq m(B)$. Note that $B^c = \varphi^{-1}(A^c)$. Similarly, $m(A^c)\geq m(B^c)$. Therefore, $m(A) + m(A^c)\geq m(B) + m(B^c)$. Since both sides are equal to 1, we have an equality. Thus, $m(A) = m(\varphi^{-1}(A))$.
\end{proof}

\begin{lemma} \label{l:noquasinilpotents}
For any nonzero $f\in A(\mathbb{D})$, if $\varphi$ is elliptic with the Denjoy-Wolff point $0$, then $Uf$ is not a quasinilpotent element of $A(\mathbb{D})\times_\alpha\mathbb{Z}^+$.
\end{lemma}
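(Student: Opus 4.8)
The plan is to estimate the spectral radius of $Uf$ from below and show it is strictly positive, so that $Uf$ cannot be quasinilpotent. First I would expand powers of $Uf$ using the commutation relation $fU = U\al(f)$: pushing every $U$ to the left gives $(Uf)^k = U^k\,f\,(f\circ\vpi)\cdots(f\circ\vpi^{k-1})$. Setting $P_k = \prod_{j=0}^{k-1}(f\circ\vpi^j)\in\sA(\bbD)$ and using the identity $\|U^k g\| = \|g\|$ recorded in Section~2, I obtain $\|(Uf)^k\| = \|P_k\|$, where the right-hand side is the disk algebra norm; by the maximum modulus principle $\|P_k\| = \sup_{\bbT}|P_k| =: \|P_k\|_\infty$. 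Hence the spectral radius of $Uf$ equals $\lim_{k\to\infty}\|P_k\|_\infty^{1/k}$, and it suffices to bound this limit away from zero.

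Next I would replace the supremum by the logarithmic average over $\bbT$. Since $\log|P_k(z)|\le\log\|P_k\|_\infty$ for every $z\in\bbT$, integrating against normalized Lebesgue measure $m$ yields $\log\|P_k\|_\infty \ge \int_\bbT\log|P_k|\,dm = \sum_{j=0}^{k-1}\int_\bbT\log|f\circ\vpi^j|\,dm$. This is where the hypothesis enters: as $\vpi$ is elliptic with Denjoy--Wolff point $0$, Lemma~\ref{l:measurepreserving} shows $\vpi$ is measure-preserving, hence so is each iterate $\vpi^j$. The change-of-variables formula for a measure-preserving map then gives $\int_\bbT\log|f\circ\vpi^j|\,dm = \int_\bbT\log|f|\,dm$ for all $j$ (applying it separately to $\log^+|f|$ and $\log^-|f|$ to stay within integrable functions). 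Writing $c = \int_\bbT\log|f|\,dm$, this collapses the sum to $\log\|P_k\|_\infty \ge kc$, so $\|P_k\|_\infty^{1/k}\ge e^{c}$ and the spectral radius of $Uf$ is at least $e^{c}$.

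The crux is to verify $c > -\infty$, for only then is $e^{c} > 0$; this is exactly the step that uses $f\in\sA(\bbD)$ rather than a general $f\in C(\bbT)$. A nonzero disk algebra function lies in $H^{\infty}$, and for such a function $\log|f|$ is integrable on $\bbT$: it is bounded above by $\log\|f\|_\infty$, while the lower bound comes from the canonical factorization, equivalently from Jensen's formula, which gives $\int_\bbT\log|f|\,dm\ge\log|f(0)|$ when $f(0)\neq 0$ and, after dividing out any zero of $f$ at the origin, forbids the integral from diverging to $-\infty$ in general. I expect this integrability fact to be the main obstacle, since it is precisely what fails for a generic $f\in C(\bbT)$ vanishing on a set of positive measure --- such an $f$ does produce a genuinely quasinilpotent $Uf$, and it is the analyticity of $f$ that rules this out. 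Granting $c > -\infty$, we have $e^{c} > 0$, and therefore $Uf$ is not quasinilpotent.
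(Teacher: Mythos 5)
Your proof is correct, and it reaches the same lower bound $\exp\bigl(\int_\bbT\log|f|\,dm\bigr)$ for the spectral radius as the paper does, but the key analytic step is genuinely different. The paper applies the Birkhoff pointwise Ergodic Theorem to the averages $\frac1n\sum_{k=0}^{n-1}\log|f\circ\vpi^k(x)|$ and selects a point $x_0$ where they converge to $\int_\bbT\log|f|\,dm$; note that for that almost-everywhere limit to be the constant $\int_\bbT\log|f|\,dm$ one also needs ergodicity of $\vpi$, which holds for elliptic Blaschke products by Theorems~\ref{t:bch1} and~\ref{t:juliasetT} but is left implicit in the paper. You instead integrate the trivial pointwise bound $\log|P_k|\le\log\|P_k\|_\infty$ over $\bbT$ and use only the change-of-variables identity $\int_\bbT\log|f\circ\vpi^j|\,dm=\int_\bbT\log|f|\,dm$ coming from Lemma~\ref{l:measurepreserving}; this avoids the Ergodic Theorem and ergodicity altogether and is the more elementary route. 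Both arguments hinge on the same two ingredients: measure preservation (Lemma~\ref{l:measurepreserving}) and the integrability of $\log|f|$ for a nonzero function in $H^1$, which you correctly identify as the point where analyticity of $f$ is indispensable. Your identity $\|(Uf)^k\|=\|P_k\|_\infty$ with $P_k=\prod_{j=0}^{k-1}f\circ\vpi^j$ matches the paper's use of $\|U^kg\|=\|g\|$, so no gap there.
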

\begin{proof}
Note that $||(Uf)^n||^{\frac{1}{n}} = \displaystyle\sup_{x\in\mathbb{T}}|f(x) f\circ\varphi(x)\dots f\circ\varphi^{n-1}(x)|^{\frac{1}{n}}$. Since $f\in H^1(\mathbb{T}), \log |f|$ is integrable (see \cite{Hoffman} Corollary p.52). By Lemma~\ref{l:measurepreserving} and the Ergodic Theorem, $\log |f(x) f\circ\varphi(x)\dots f\circ\varphi^{n-1}(x)|^{\frac{1}{n}} = \frac{1}{n}\displaystyle\sum^{n-1}_{k = 0}\log |f\circ\varphi^k(x)|$ converges to $\int_\mathbb{T}\log |f| dm$ almost everywhere. Let $x_0\in\mathbb{T}$ be such that the above convergence holds. Then 
\begin{align*}
\displaystyle\lim_{n\rightarrow\infty}||(Uf)^n||^{\frac{1}{n}} &\geq \displaystyle\lim_{n\rightarrow\infty}|f(x_0) f\circ\varphi(x_0)\dots f\circ\varphi^{n-1}(x_0)|^{\frac{1}{n}}\\
&= \exp\{\displaystyle\lim_{n\rightarrow\infty}\log |f(x_0) f\circ\varphi(x_0)\dots f\circ\varphi^{n-1}(x_0)|^{\frac{1}{n}}\}\\
&= \exp\{\int_\mathbb{T}\log |f| dm\}\\
&> 0.
\end{align*}
Hence, $Uf$ is not a quasinilpotent element.
\end{proof}

\begin{theorem} \label{t:elliptic}
If $\varphi$ is elliptic, then $A(\mathbb{D})\times_\alpha\mathbb{Z}^+$ has no nonzero quasinilpotent elements.
\end{theorem}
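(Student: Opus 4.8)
The plan is to reduce to the case where the Denjoy--Wolff point is the origin, and then to isolate the lowest-order Fourier coefficient of the powers of a hypothetical quasinilpotent element, comparing it against Lemma~\ref{l:noquasinilpotents} applied not to $\varphi$ itself but to a suitable iterate $\varphi^k$.

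First I would carry out the reduction. Since $\varphi$ is elliptic it has a fixed point $z_1\in\mathbb{D}$; choosing a conformal automorphism $\tau$ of $\mathbb{D}$ with $\tau(0)=z_1$ and setting $\psi=\tau^{-1}\circ\varphi\circ\tau$, one checks $\psi(0)=\tau^{-1}(\varphi(z_1))=\tau^{-1}(z_1)=0$, so $\psi$ is a non-trivial finite Blaschke product that is elliptic with Denjoy--Wolff point $0$. By Theorem~\ref{t:conjugate}, the map $\Phi$ is a complete isometric isomorphism of $A(\mathbb{D})\times_\alpha\mathbb{Z}^+$ onto $A(\mathbb{D})\times_\beta\mathbb{Z}^+$, where $\beta(f)=f\circ\psi$; since an isometric isomorphism preserves spectral radii, it maps quasinilpotents to quasinilpotents. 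Thus it suffices to prove the theorem assuming the Denjoy--Wolff point is $0$, which I assume from now on.

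Next, let $F$ be a quasinilpotent element and aim to show $F=0$. By Lemma~\ref{l:quasinilpotent1}(1) we must have $\pi_0(F)=0$. Suppose for contradiction that $F\neq0$, and let $k\geq1$ be the smallest index with $f_k:=\pi_k(F)\neq0$. Exactly as in the computation in the proof of Theorem~\ref{t:jacobsonradical}, using $fU=U\alpha(f)$ and the fact that every Fourier coefficient of $F$ below level $k$ vanishes, the only contribution to the $U^{nk}$ term of $F^n$ comes from selecting $U^kf_k$ in each factor, so
\[
\pi_{nk}(F^n)\;=\;\prod_{i=0}^{n-1} f_k\circ\varphi^{ik}.
\]
The crucial point is that $\varphi^k$ is again a non-trivial finite Blaschke product fixing $0$, hence elliptic with Denjoy--Wolff point $0$; therefore all the hypotheses used in Lemma~\ref{l:noquasinilpotents} (measure-preservation from Lemma~\ref{l:measurepreserving}, and ergodicity via Theorems~\ref{t:bch1} and~\ref{t:juliasetT}) hold with $\varphi$ replaced by $\varphi^k$. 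Applying Lemma~\ref{l:noquasinilpotents} to $\varphi^k$ and the nonzero function $f_k$, the product of the generator of $A(\mathbb{D})\times_{\varphi^k}\mathbb{Z}^+$ with $f_k$ is not quasinilpotent, and the norm of its $n$-th power equals $\bigl\|\prod_{i=0}^{n-1} f_k\circ\varphi^{ik}\bigr\|_{A(\mathbb{D})}=\|\pi_{nk}(F^n)\|$. Combining this with $\|\pi_{nk}(F^n)\|\leq\|F^n\|$ gives
\[
\lim_{n\to\infty}\|F^n\|^{1/n}\;\geq\;\lim_{n\to\infty}\Bigl\|\prod_{i=0}^{n-1} f_k\circ\varphi^{ik}\Bigr\|^{1/n}\;>\;0,
\]
contradicting the quasinilpotence of $F$. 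Hence $F=0$.

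The routine parts are the conjugation reduction and the Fourier-coefficient bound $\|\pi_j(\cdot)\|\leq\|\cdot\|$. The step I expect to require the most care is the justification that Lemma~\ref{l:noquasinilpotents} genuinely transfers to the iterate $\varphi^k$: one must observe that $\varphi^k$ remains elliptic with Denjoy--Wolff point $0$ (so that both the measure-preservation and the ergodicity underlying that lemma are available for $\varphi^k$), and that the norm identity $\|(U_{\varphi^k}f_k)^n\|=\|\pi_{nk}(F^n)\|$ matches exactly the lowest-order coefficient computed above. Once the minimal nonzero Fourier coefficient is pinned down and $\varphi^k$ is recognized as elliptic, the $Uf$ case settled in Lemma~\ref{l:noquasinilpotents} closes the argument.
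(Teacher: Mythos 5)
Your proposal is correct and follows essentially the same route as the paper's proof: conjugate so the Denjoy--Wolff point is $0$ via Theorem~\ref{t:conjugate}, isolate the lowest nonzero Fourier coefficient $f_k$, identify $\pi_{nk}(F^n)$ with the $n$-th power of $U_\beta f_k$ in the semicrossed product for $\beta(f)=f\circ\varphi^k$, and apply Lemma~\ref{l:noquasinilpotents} to the elliptic iterate $\varphi^k$. No gaps.
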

\begin{proof}
Any elliptic map is conjugate to an elliptic map which fixes zero.
Then by Theorem~\ref{t:conjugate}, the semicrossed product is completely isometrically isomorphic to one for which the map $\vpi$ is elliptic with Denjoy-Wolff point $0.$ Thus there is no loss of
generality in assuming this is the case.

Let $F\in A(\mathbb{D})\times_\alpha\mathbb{Z}^+$ be any quasinilpotent element. Let $\pi_i(F) = f_i$ be the $i^{\text{th}}$ Fourier coefficient of $F$. If $f_0$ is not identically zero, then $F$ is not a quasinilpotent element. Assume that there exists $k > 0$ such that $f_{k}\neq 0$ where $f_i = 0$ for all $i < k$. For $n > 0$,
\begin{align*}
||F^n||^{\frac{1}{n}} &\geq ||\pi_{nk}(F^n)||^{\frac{1}{n}}\\
&= \displaystyle\sup_{x\in\mathbb{T}} |f_k(x) f_k\circ\varphi^k(x)\dots f_k\circ\varphi^{(n-1)k}(x)|^{\frac{1}{n}}\\
&= ||(U_{\be}{f_k})^n||^{\frac{1}{n}},
\end{align*}
where $U_{\be}{f_k}\in A(\mathbb{D})\times_\beta\mathbb{Z}^+$ and $\beta(f) = f\circ\varphi^k$.
Clearly, $\varphi^k$ is also elliptic with the Denjoy-Wolff point $0$.
By Lemma \ref{l:noquasinilpotents}, $\displaystyle\lim_{n\rightarrow\infty}||F^n||^{\frac{1}{n}}\geq\displaystyle\lim_{n\rightarrow\infty}||(U_{\be}{f_k})^n||^{\frac{1}{n}} > 0$.
Then, $F$ is not a quasinilpotent element. This is a contradiction. Hence, $F$ is zero.
\end{proof}

The question of whether the Jacobson radical coincides with the set of quasinilpotent elements has resolved in all cases except where the map $\vpi$ is parabolic with zero hyperbolic step. The argument
in the elliptic case cannot be used here, since the map is not measure-preserving.  This question remains open.

\bibliographystyle{plain}

\begin{thebibliography}{00}

\bibitem{BCH}
M.\ Basallote, M. D.\ Contreras, and C.\ Hern$\acute{\text{a}}$ndez-Mancera, 
\textit{Commuting finite Blaschke products with no fixed points in the unit disk,} 
{J.\ Math.\ Anal.\ Appl.}
\emph{359} (2009) no. 2, 547--555.
\filbreak

\bibitem{BP}
D.\ Buske and J.\ Peters,
\textit{Semicrossed products of the disk algebra; contractive representations and maximal ideals,}
{Pacific\ J.\ Math.}
\emph{185} (1998), 97--113.

\bibitem{Carleson-Gamelin}
L.\ Carleson and T. W.\ Gamelin, 
\emph{Complex Dynamics}, 
Springer-Verlag, New York, 1993.
\filbreak

\bibitem{CDP}
M. D.\ Contreras, S.\ D$\acute{\text{i}}$az-Madrigal, and Ch.\ Pommerenke, 
\textit{Iteration in the unit disk: the parabolic zoo}, 
Proceedings of the Conference \textquotedblleft Complex and Harmonic Analysis\textquotedblright held in Thessaloniki, Greece in May of 2006, Destech Publications Inc. Editors: A.\ Carbery, P. L.\ Duren, D.\ Khavinson, and A. G.\ Siskakis, 2007, 63--91.
\filbreak

\bibitem{DAELW}
H. G.\ Dales, P.\ Aiena, J.\ Eschmeier, K.\ Laursen, and G. A.\ Willis,
\emph{Introduction to Banach Algebras, Operators, and Harmonic Analysis},
Cambridge University Press, UK, 2003.
\filbreak

\bibitem{DK1}
K. A.\ Davidson and E.\ Katsoulis,
\textit{Dilating covariant representations of the noncommutative disc algebras,}
{J.\ Funct.\ Anal.,}
\emph{259} (2010), 817--831.
\filbreak

\bibitem{DK2}
K. A.\ Davidson and E.\ Katsoulis,
\textit{Dilating theory, commutant lifting and semicrossed products,}
{Documenta\ Math.}
\emph{16} (2011), 781--868.
\filbreak

\bibitem{DK}
K. A.\ Davidson and E.\ Katsoulis,
\textit{Semicrossed products of the disk algebra,}
preprint, arXiv:1104.1398v1, 2011.
\filbreak

\bibitem{DK3}
K.A.\ Davidson and E.\ Katsoulis
\textit{Semicrossed products of simple C$^*$-algebras}
{Math.\ Ann.}
\emph{342} {2008} {no. 3}, 515--525

\bibitem{DKM}
A. P.\ Donsig, A.\ Katavolos, and A.\ Manoussos, 
\textit{The Jacobson radical for analytic crossed products,} 
{J.\ Funct.\ Anal.}, 
\emph{187} (2001), 129--145.
\filbreak

\bibitem{Hamilton}
D. H.\ Hamilton, 
\textit{Absolutely continuous conjugacies of Blaschke products}, 
{Adv.\ Math.},
\emph{121} (1996), 1--20.
\filbreak

\bibitem{Hoffman}
K.\ Hoffman, 
\emph{Banach Spaces of Analytic Functions}, 
Prentice-Hall, Englewood Cliffs, NJ, 1962.
\filbreak

\bibitem{HPW}
T.\ Hoover, J.\ Peters, and W.\ Wogen, 
\textit{Spectral properties of semicrossed products,} 
{Houston\ J.\ Math.},
\emph{19} (1993), 649--660.
\filbreak

\bibitem{K}
E. T. A.\ Kakariadis,
\textit{Semicrossed products of \cstar algebras and their \cstar envelopes,}
preprint, arXiv:1102.2252v2, 2011.
\filbreak

\bibitem{KK}
E.\ Kakariadis and E.\ Katsoulis, 
\textit{Semicrossed products of operator algebras and their \cstar envelopes,}
preprint, arXiv:1008.2374v1, 2010.
\filbreak

\bibitem{Pommerenke}
C.\ Pommerenke,
\textit{Boundary Behaviour of Conformal Maps,}
Springer-Verlag, Berlin Heidelberg, 1992.

\bibitem{Shapiro}
J. H.\ Shapiro, 
\emph{Composition Operators and Classical Function Theory}, 
Springer-Verlag, New York, 1993.
\filbreak

\bibitem{Walters}
P.\ Walters,
\emph{An Introduction to Ergodic Theory},
Springer-Verlag, New York, 1982.
\filbreak

\end{thebibliography}

\end{document}